
\documentclass [11pt,a4paper]{article}
\usepackage{amsmath}
\usepackage{amsthm}
\usepackage{amssymb}
\usepackage{amscd}
\usepackage{amsfonts}
\usepackage{amsbsy}
\usepackage{epsfig,afterpage}
\usepackage[dvips]{psfrag}
\usepackage{graphicx}
\usepackage{indentfirst, latexsym, bm,amssymb}
\usepackage{bbding}

\advance\textwidth by +1.0in \advance\textheight by +1.0in
\advance\oddsidemargin by -0.5in \advance\evensidemargin by -1.0in
\advance\topmargin by -0.5in
\parskip 0.10in
\parindent = 18pt


\newtheorem {theorem} {Theorem}
\newtheorem {proposition} [theorem]{Proposition}

\newtheorem {lemma}  [theorem]{Lemma}

\newcommand{\X}{{\mathcal{\bf} X}}

\newcommand{\R}{{\mathbb R}}
\newcommand{\Z}{{\mathbb Z}}

\def\a{\alpha}
\def\b{\beta}

\def\g{\gamma}

\def\e{\varepsilon}
\def\p{\partial}

\title{\large\bf Nonuniform Dichotomy Spectrum and Normal Forms
for Nonautonomous Differential Systems}
\author{\normalsize\bf\sc Xiang Zhang\footnote{\small
The author is partially supported by NNSF of China grant 11271252,  RFDP of Higher Education of China grant 20110073110054, and FP7-PEOPLE-2012-IRSES-316338 of Europe.\newline \quad $\ddag$ Submitted to JFA in JAN 1, 2011, and Accepted by JFA in JUL 30, 2014}\\
\normalsize\it Department of Mathematics, and MOE--LSC, Shanghai Jiao Tong
               University,  \\ \normalsize\it Shanghai 200240,
               People's Republic of China.
      \\ \normalsize  E-mail: xzhang@sjtu.edu.cn\\}
\date{}

\begin{document}
\maketitle

\begin{abstract}
\noindent The aim of this paper is to study the normal forms of nonautonomous differential systems. For doing so, we
first investigate the nonuniform dichotomy spectrum of the
linear evolution operators that admit a nonuniform exponential
dichotomy, where the linear evolution operators are defined by
nonautonomous differential equations $\dot x=A(t)x$ in $\R^n$. Using the nonuniform dichotomy spectrum we obtain the normal forms
of the nonautonomous linear differential equations. Finally we establish the finite jet normal forms of
the nonlinear differential systems $\dot x=A(t)x+f(t,x)$ in $\R^n$, which is based
on the nonuniform dichotomy spectrum and the normal forms of the
nonautonomous linear systems.

\hskip0.00011mm

\noindent {\bf Key words and phrases:} Nonuniform exponential
dichotomy, nonuniform dichotomy spectrum, nonautonomous
differential system, normal form.

\hskip0.00011mm

\noindent {\bf 2000 Mathematics subject classification:} 34A34,
34C41, 37G05, 58D05.
\end{abstract}


\bigskip

\section{Introduction and statement of the main results}\label{s1}

\setcounter{section}{1} \setcounter{equation}{0}

The normal form theory in dynamical systems is to simplify ordinary differential equations through the change of variables.
This theory can be traced back to Poincar\'e \cite{Po1}. Some classical results in this direction for autonomous differential systems are the
Poincare--Dulac normal form theorem \cite{Po}, the Siegel's theorem
\cite{Si}, the Hartman--Grobman's theorem \cite{Gr,Ha}, the
Sternberg's theorem \cite{St,St58}, the Chen's theorem \cite{Ch}, the Takens' theorem \cite{Ta}
and so on. See also \cite{Bi,CYZ,IY,Li,Zh} and the recent survey paper \cite{Sto} and the references therein. For nonautonomous systems, Barreira and Valls had
established several results on the topological conjugacy between nonuniformly hyperbolic dynamical systems (see e.g.
\cite{BV06JDE}, \cite{BV07JFA}--\cite{BV08ETDS}). Using the resonance of the dichotomy
spectrum to study the normal forms of nonautonomous system,
Siegmund \cite{Si02JDE} obtained a finite order normal form, and Wu
and Li \cite{WL08} got analytic normal forms of a class of analytic nonautonomous differential systems. As our knowledge,
these last two papers are the only ones in which the
normal forms of nonautonomous systems via the dichotomy spectrums were studied.
Recently Li, Llibre and Wu \cite{LLW} and \cite{LW1} also had studied the normal forms of almost periodic differential and difference equations, respectively.
For random differential systems there also appeared some
results on normal forms \cite{LL1,LL2,LL3}, in which they extended the
Poincar\'e's, the Sternberg's and the Siegel's normal form theorems for
autonomous differential systems to random dynamical systems.

As well--known, the normal form theory has played important roles in the study of bifurcation and some related topics of dynamical systems. Recently this theory has been successfully applied to study the embedding flow problem of diffeomorphims, see for instance \cite{LLZ02,Zh1,Zh2,Zh3}.

In this paper we will study the normal forms of nonautonomous differential systems
with their linear parts admitting a nonuniform exponential dichotomy.
For this aim we first consider the nonautonomous linear differential systems in $\R^n$
\begin{equation}\label{1.1}
\dot x=A(t)x,
\end{equation}
with $A(t)\in M_n(\R)$ the set of square matrix functions of $n$th
order defined in $\R$, we assume in this paper that each
solution of system \eqref{1.1} is defined on the whole $\R$.
Denote by $\Phi(t,s)$ the evolution operator associated to system
\eqref{1.1}. Then we have
\[
x(t)=\Phi(t,s)x(s),\quad \Phi(t,s)\Phi(s,\tau)=\Phi(t,\tau) \mbox{
for all } t,s,\tau\in\R,
\]
where $x(t)$ is a solution of system \eqref{1.1}.

We say that system \eqref{1.1} admits a {\it nonuniform exponential dichotomy}
if there exists an invariant projection $P(t)\in M_n(\R)$ (where {\it invariant} means that $P(t)\Phi(t,s)=\Phi(t,s)P(s)$ for all $t,s\in\R$), and $K\ge
1$, $\alpha<0<\beta$ and $\mu,\nu\ge 0$ with $\alpha+\mu<0$,
$\beta-\nu>0$ and $\max\{\mu,\nu\}\le \min\{-\alpha,\beta\}$ such that
\begin{eqnarray*}
\|\Phi(t,s)P(s)\|&\le& Ke^{\a(t-s)+\mu|s|} \mbox{ for } t\ge s,\\
\|\Phi(t,s)(I-P(s))\|&\le& Ke^{\b(t-s)+\nu|s|} \mbox{ for } t\le
s.
\end{eqnarray*}
If $\mu=\nu=0$ it defines the uniform exponential dichotomy or
simply exponential dichotomy (see e.g. \cite{Si02JDDE,Co78}).
Barreira and Valls \cite{BV07JDDE} showed that the system in
$\R^2$
\[
\dot x=-(\omega+at\sin t)x,\quad \dot y=(\omega+at\sin t)y,
\]
admits a nonuniform exponential dichotomy but does not admit a
uniform exponential dichotomy.

In our definition of the nonuniform exponential dichotomy there appear the
extra conditions $\alpha+\mu<0$, $\beta-\nu>0$ and $\max\{\mu,\nu\}\le \min\{-\alpha,\beta\}$, which did not
appear explicitly in the definition of  \cite{BV06JDE,BV07JFA,BV08ETDS}. In fact, in their results on the conjugacy between two
nonautonomous dynamical systems they always assume that the nonuniform
constants $\mu$ and $\nu$ are sufficiently small, and consequently the extra conditions hold implicitly.

The {\it nonuniform dichotomy spectrum} of system \eqref{1.1} is the set
\[
\Sigma(A)=\{\g\in\R;\, \dot x=(A(t)-\gamma I)x \mbox{ admits no
nonuniform exponential dichotomy}\}.
\]
Its complement $\rho(A)=\R\setminus \Sigma(A)$ is called the {\it
resolvent set} of system \eqref{1.1}.

A {\it linear integral manifold} of system \eqref{1.1} is a nonempty set
$W$ of $\R\times\R^n$ satisfying $\{(t,\Phi(t,\tau)\xi);\,$ $t\in\R\}\subset W$  for each
$(\tau,\xi)\in W$, and for any given $\tau\in\R$ the fiber
$W(\tau)=\{\xi\in\R^n;\,(\tau, \xi)\in W\}$ is a linear subspace
of $\R^n$. In the following we also call $W$ {\it invariant} by \eqref{1.1}. We note that all the fibers $W(\tau)$ have the same
dimension, denoted by $\dim W$, and they form a vector bundle over
$\R$. A linear integral manifold is a topological manifold in
$\R\times\R^n$.

Let $W_1$ and $W_2$ be two linear integral manifolds of
\eqref{1.1}. Their {\it intersection} and {\it sum} are defined
respectively as
\begin{eqnarray*}
W_1\cap W_2&=&\{(\tau,\xi)\in\R\times\R^n;\,\xi\in W_1(\tau)\cap
W_2(\tau)\},\\
W_1+W_2&=&\{(\tau,\xi)\in\R\times\R^n;\,\xi\in
W_1(\tau)+W_2(\tau)\}.
\end{eqnarray*}
They are also linear integral manifolds. A sum of linear integral
manifolds $W_1,\ldots,W_k$ is called {\it Whitney--sum}, denoted
by $W_1\oplus\ldots\oplus W_k$, if $W_i\cap W_j=\R\times \{0\}$
for $1\le i\ne j\le k$.

For a $\gamma\in\R$ we define two subsets of $\R\times\R^n$:
\begin{equation}\label{1.2}
\begin{array}{l}
\mathcal
U_\gamma=\left\{(\tau,\xi)\in\R\times\R^n;\,\sup\limits_{t\ge
0}\|\Phi(t,\tau)\xi\|e^{-\gamma t}<\infty\right\},\\
\mathcal
V_\gamma=\left\{(\tau,\xi)\in\R\times\R^n;\,\sup\limits_{t\le
0}\|\Phi(t,\tau)\xi\|e^{-\gamma t}<\infty\right\}.
\end{array}
\end{equation}
In this paper the notations $\mathcal U_\gamma$ and $\mathcal
V_\gamma$ always denote the sets defined in \eqref{1.2},
respectively.

Our first result is on the structure of the nonuniform dichotomy
spectrum of system \eqref{1.1}. It is the generalization of the
spectral theorem of \cite{Si02JDDE} for the dichotomy spectrum to the
nonuniform dichotomy spectrum of system \eqref{1.1}.

\begin{theorem}\label{th1.1} For system \eqref{1.1}, the following statements hold.

\begin{itemize}
\item[$(a)$] The nonuniform dichotomy spectrum $\Sigma(A)$ of
system \eqref{1.1} is the union of $m$ disjoint closed intervals
in $\R$ $($called spectral intervals$)$ with $0\le m\le n$. Precisely, if $m=0$ then $\Sigma(A)=\emptyset$; if
$m=1$ then $\Sigma(A)=\mathbb R$ or $(-\infty,b_1]$ or $[a_1,b_1]$ or $[a_1,\infty)$; if $m>1$ then
$\Sigma(A)=I_1\cup [a_2,b_2]\cup\ldots\cup [a_{m-1},b_{m-1}]\cup I_m$ with $I_1=[a_1,b_1]$ or $(-\infty,b_1]$ and $I_m=[a_m,b_m]$ or $[a_m,\infty)$,
where $a_i\le b_i<a_{i+1}$ for $i=1,\ldots,m-1$.

\item[$(b)$]  If $m\ge 1$ and $\Sigma(A)\ne\R$, assume that
\[
\Sigma(A)=I_1\cup[a_2,b_2]\cup\ldots\cup
[a_{m-1},b_{m-1}]\cup I_m,
\]
with $I_1,I_m$ and $a_i,b_i$ given in statement $(a)$. If $I_1=[a_1,b_1]$ and $I_m=[a_m,b_m]$, set $b_0=-\infty$ and $a_{m+1}=\infty$, and choose
$\gamma_i\in(b_i,a_{i+1})$ for $i=0,1,\ldots,m$, we have the
linear integral manifolds $\mathcal U_{\gamma_i}$ and $\mathcal
V_{\gamma_i}$ for $i=0,1,\ldots,m$.  If $I_1=(-\infty,b_1]$, we choose $\gamma_0<b_1$ and
set $\mathcal U_{\gamma_0}=\R\times\{0\}$ and $\mathcal
V_{\gamma_0}=\R\times\R^n$.  If $I_m=[a_m,\infty)$, we choose
$\gamma_m>a_m$ and set $\mathcal U_{\gamma_{m}}=\R\times\R^n$ and
$\mathcal V_{\gamma_{m}}=\R\times\{0\}$. Define
\[
\mathcal W_0=\mathcal U_{\gamma_0},\quad \mathcal W_i=\mathcal
U_{\gamma_i}\cap\mathcal V_{\gamma_{i-1}} \,\mbox{ for }
i=1,\ldots,m, \quad \mathcal W_{m+1}=\mathcal V_{\gamma_m}.
\]
Then $\dim\mathcal W_i\ge 1$ for $i=1,\dots,m$ and
\[
\R\times\R^n=\mathcal W_0\oplus\mathcal
W_1\oplus\ldots\oplus\mathcal W_{m+1}.
\]
\end{itemize}
\end{theorem}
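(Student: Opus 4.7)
The plan is to follow the blueprint of Siegmund's uniform spectral theorem \cite{Si02JDDE}, the essential new feature being careful tracking of the four nonuniform dichotomy parameters $\alpha,\beta,\mu,\nu$ and their admissibility constraints $\alpha+\mu<0$, $\beta-\nu>0$ and $\max\{\mu,\nu\}\le\min\{-\alpha,\beta\}$. Three ingredients form the skeleton: openness of the resolvent set $\rho(A)$; monotonicity of the filtrations $\gamma\mapsto\mathcal U_\gamma$ and $\gamma\mapsto\mathcal V_\gamma$; and the splitting $\mathcal U_\gamma\oplus\mathcal V_\gamma=\R\times\R^n$ available for every $\gamma\in\rho(A)$.

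For part (a), the key observation is that the evolution operator of $\dot x=(A(t)-\gamma I)x$ equals $e^{-(\gamma-\gamma_0)(t-s)}\Phi_{\gamma_0}(t,s)$, so that the dichotomy bounds at $\gamma_0\in\rho(A)$ immediately furnish dichotomy bounds at $\gamma$ with shifted rates $\alpha+(\gamma_0-\gamma)$ and $\beta+(\gamma_0-\gamma)$ and unchanged nonuniform exponents $\mu,\nu$; picking $|\gamma-\gamma_0|$ small keeps the three admissibility constraints valid, which proves openness of $\rho(A)$ and that $\Sigma(A)$ is a disjoint union of closed intervals. Monotonicity $\mathcal U_{\gamma_1}\subset\mathcal U_{\gamma_2}$ for $\gamma_1<\gamma_2$ follows directly from \eqref{1.2} since $\|\Phi(t,\tau)\xi\|e^{-\gamma_2 t}=(\|\Phi(t,\tau)\xi\|e^{-\gamma_1 t})e^{(\gamma_1-\gamma_2)t}$, and dually for $\mathcal V_\gamma$. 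For $\gamma\in\rho(A)$ with projection $P_\gamma(\cdot)$, the stable and unstable bundles are identified with the ranges of the projection: the forward bound gives $\mathrm{Im}\,P_\gamma(\tau)\subset\mathcal U_\gamma(\tau)$, and for the reverse inclusion I would use the identity $(I-P_\gamma(\tau))\xi=\Phi_\gamma(\tau,t)(I-P_\gamma(t))\Phi_\gamma(t,\tau)\xi$ together with boundedness of $\Phi_\gamma(t,\tau)\xi$ for $t\ge\tau$ and the strict inequality $\beta-\nu>0$ to force $(I-P_\gamma(\tau))\xi=0$. Symmetrically $\mathcal V_\gamma(\tau)=\ker P_\gamma(\tau)$, so $\dim\mathcal U_\gamma\in\{0,1,\ldots,n\}$, is monotone in $\gamma$, and locally constant on $\rho(A)$ by openness; the dimension can thus jump at most $n$ times, each jump corresponding to a spectral interval, which yields the structure claimed in (a) (including the unbounded cases, which appear exactly when the first jump is at $\dim 0$ or the last jump reaches $\dim n$).

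For part (b), with gap values $\gamma_0<\gamma_1<\ldots<\gamma_m$ chosen as in the statement, set $\mathcal W_0=\mathcal U_{\gamma_0}$, $\mathcal W_i=\mathcal U_{\gamma_i}\cap\mathcal V_{\gamma_{i-1}}$ for $1\le i\le m$, and $\mathcal W_{m+1}=\mathcal V_{\gamma_m}$. Each $\mathcal W_i$ is a linear integral manifold as an intersection of two such. Working fiberwise, the decomposition $\mathcal U_{\gamma_{i-1}}(\tau)\oplus\mathcal V_{\gamma_{i-1}}(\tau)=\R^n$ together with the inclusion $\mathcal U_{\gamma_{i-1}}(\tau)\subset\mathcal U_{\gamma_i}(\tau)$ forces $\mathcal U_{\gamma_i}(\tau)=\mathcal U_{\gamma_{i-1}}(\tau)\oplus(\mathcal U_{\gamma_i}(\tau)\cap\mathcal V_{\gamma_{i-1}}(\tau))=\mathcal U_{\gamma_{i-1}}(\tau)\oplus\mathcal W_i(\tau)$. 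Iterating yields $\mathcal U_{\gamma_m}=\mathcal W_0\oplus\mathcal W_1\oplus\ldots\oplus\mathcal W_m$, and combining with $\mathcal V_{\gamma_m}=\mathcal W_{m+1}$ produces the Whitney decomposition $\R\times\R^n=\mathcal W_0\oplus\mathcal W_1\oplus\ldots\oplus\mathcal W_{m+1}$. The strict jump $\dim\mathcal U_{\gamma_i}>\dim\mathcal U_{\gamma_{i-1}}$ across each $[a_i,b_i]$ then gives $\dim\mathcal W_i\ge1$ for $i=1,\ldots,m$.

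The main obstacle is the perturbation step underlying openness of $\rho(A)$: unlike in the uniform case one must simultaneously preserve four rate constants and three inequality constraints, not just the hyperbolicity gap, and the admissible window for $\gamma-\gamma_0$ has to be chosen uniformly in all of these. A secondary but technically important point is the identification $\mathcal U_\gamma(\tau)=\mathrm{Im}\,P_\gamma(\tau)$, which depends precisely on the strict inequalities $\alpha+\mu<0$ and $\beta-\nu>0$ built into the definition; without them the projection would fail to capture all bounded forward orbits and the dimension counting that drives the whole spectral classification would collapse.
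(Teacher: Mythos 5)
Your outline tracks the paper's own proof closely: both rest on openness of $\rho(A)$ via the observation that shifting $\gamma$ only translates the decay/growth rates $\alpha,\beta$ while leaving $\mu,\nu$ fixed (the paper's Proposition \ref{po2.3}), the monotonicity of $\gamma\mapsto\mathcal U_\gamma,\mathcal V_\gamma$ (Proposition \ref{po2.1}), the identification $\mathcal U_\gamma=\mathrm{Im}\,P$, $\mathcal V_\gamma=\mathrm{Ker}\,P$ using the strict inequalities $\alpha+\mu<0$, $\beta-\nu>0$ (Proposition \ref{po2.2}), the dimension-jump criterion (Proposition \ref{po2.4}), and the modular law to chain the splittings together. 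Your part~(b) works up the $\mathcal U$-filtration ($\mathcal U_{\gamma_i}=\mathcal U_{\gamma_{i-1}}\oplus\mathcal W_i$) whereas the paper descends the $\mathcal V$-filtration ($\mathcal V_{\gamma_i}=\mathcal W_{i+1}\oplus\mathcal V_{\gamma_{i+1}}$); these are dual and equivalent.

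There is, however, a genuine gap in the treatment of the unbounded spectral intervals, which your parenthetical remark (``\ldots which appear exactly when the first jump is at $\dim 0$ or the last jump reaches $\dim n$'') misdescribes and does not actually close. When $I_1=(-\infty,b_1]$, the conventionally chosen $\gamma_0<b_1$ is \emph{not} a resolvent point, so Proposition \ref{po2.4} cannot be invoked to give a strict jump ``across $I_1$,'' and the bare count ``$\dim\mathcal U_\gamma$ takes at most $n+1$ values on the components of $\rho(A)$'' only yields $m\le n+1$, not $m\le n$. Sharpening this to $m\le n$, and likewise establishing $\dim\mathcal W_1\ge 1$ in part~(b), requires the additional implication the paper uses explicitly: if $\gamma\in\rho(A)$ and $\dim\mathcal U_\gamma=0$, then $P(t)\equiv 0$ and the dichotomy estimate propagates to every $\gamma'<\gamma$, forcing $(-\infty,\gamma]\subset\rho(A)$ (dually, $\dim\mathcal U_\gamma=n$ forces $[\gamma,\infty)\subset\rho(A)$). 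This is what rules out $\dim\mathcal U_\gamma=0$ on the leftmost component of $\rho(A)$ when $I_1$ is unbounded, and what drives the contradiction argument in part~(a). Without it your dimension bookkeeping does not establish the claimed bound on $m$ in the half-line cases, nor $\dim\mathcal W_i\ge 1$ for the boundary indices $i=1$ and $i=m$ when the corresponding spectral interval is a ray. You should also note that the rate-shift argument for openness must be checked against all three admissibility constraints, including $\max\{\mu,\nu\}\le\min\{-\alpha,\beta\}$, not just the two strict inequalities; this is a real technical point that you correctly anticipate but do not carry out.
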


The linear integral manifold $\mathcal W_i$ is called a {\it
spectral manifold} for $i=0,\ldots,m+1$. We shall see from
Proposition \ref{po2.3} below that the spectral manifold $\mathcal
W_i$ is independent of the choice of $\gamma_i$.

Next we present a sufficient condition for a nonuniform
dichotomy spectrum to be nonempty and bounded.

The evolution operator $\Phi(t,s)$ of $\dot x=A(t)x$ has a {\it
nonuniformly bounded growth} if there exist $K\ge 1$, $a\ge 0$ and
$\e\ge 0$ such that
\begin{equation}\label{1.3}
\|\Phi(t,s)\|\le Ke^{a|t-s|+\e|s|},\quad t,s\in\R.
\end{equation}
If $\e=0$ the evolution operator has the so--called bounded growth
(see \cite{Si02JDDE})

\begin{theorem}\label{th1.2}
Assume that the evolution operator of system \eqref{1.1} has a nonuniformly
bounded growth. The following statements hold.
\begin{itemize}
\item[$(a)$] The nonuniform dichotomy spectrum $\Sigma(A)$ of
system \eqref{1.1} is nonempty and bounded, i.e.,
$\Sigma(A)=[a_1,b_1]\cup\ldots\cup [a_m,b_m]$ with $m\ge 1$ and
$-\infty<a_1\le b_1<\ldots<a_m\le b_m<\infty$.

\item[$(b)$] $\mathcal W_0\oplus\mathcal W_1\oplus\ldots\oplus \mathcal W_m\oplus\mathcal W_{m+1}=\R\times
\R^n$,  where $\mathcal W_i$'$s$ are the spectral manifolds defined in Theorem
\ref{th1.1}.
\end{itemize}
\end{theorem}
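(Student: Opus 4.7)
My plan is to deduce part $(a)$ from a direct dichotomy estimate (for boundedness) together with a connectedness argument (for nonemptyness), and to deduce part $(b)$ as an immediate application of Theorem \ref{th1.1}.

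For the boundedness in part $(a)$, I would take $\gamma>a+\e$ and consider the shifted equation $\dot x=(A(t)-\gamma I)x$, whose evolution operator is $e^{-\gamma(t-s)}\Phi(t,s)$. Using \eqref{1.3}, for $t\ge s$ I have
\[
\|e^{-\gamma(t-s)}\Phi(t,s)\|\le Ke^{(a-\gamma)(t-s)+\e|s|},
\]
so with the choice $P=I$, $\alpha=a-\gamma$, $\mu=\e$ (and any admissible $\beta,\nu$ for the trivial complementary estimate), all defining inequalities $\alpha+\mu<0$, $\max\{\mu,\nu\}\le\min\{-\alpha,\beta\}$ hold, yielding $\gamma\in\rho(A)$. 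A symmetric computation for $\gamma<-(a+\e)$, with $P=0$, $\beta=-(\gamma+a)$, $\nu=\e$, places $\gamma$ in $\rho(A)$ as well, so $\Sigma(A)\subseteq[-(a+\e),a+\e]$.

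For nonemptyness I would argue by contradiction. Suppose $\Sigma(A)=\emptyset$, i.e.\ $m=0$ in Theorem \ref{th1.1}$(a)$. Then for every $\gamma\in\R$ the shifted equation admits a nonuniform exponential dichotomy with an invariant projection $P_\gamma$ of well-defined rank $k_\gamma\in\{0,1,\ldots,n\}$. The computations above give $k_\gamma=n$ for $\gamma>a+\e$ and $k_\gamma=0$ for $\gamma<-(a+\e)$. The decisive step is that $\gamma\mapsto k_\gamma$ is locally constant on $\rho(A)$: this follows from the identification $\operatorname{Im}P_\gamma(\tau)=\mathcal U_\gamma(\tau)$ together with the fact that $\mathcal U_\gamma$ depends monotonically on $\gamma$ and is constant on each connected component of $\rho(A)$, properties which I would take from Proposition \ref{po2.3} and its supporting lemmas. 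Consequently $\R=\rho(A)=\bigcup_{j=0}^{n}\{\gamma:k_\gamma=j\}$ exhibits the connected space $\R$ as a disjoint union of open subsets, at least two of which are nonempty, a contradiction. Hence $\Sigma(A)\ne\emptyset$ and, together with the boundedness, $\Sigma(A)=[a_1,b_1]\cup\ldots\cup[a_m,b_m]$ with $1\le m\le n$.

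For part $(b)$, since every endpoint of $\Sigma(A)$ is finite, the two extreme spectral intervals must be $I_1=[a_1,b_1]$ and $I_m=[a_m,b_m]$ in the notation of Theorem \ref{th1.1}$(b)$ (neither of the unbounded possibilities $(-\infty,b_1]$ or $[a_m,\infty)$ can arise). I can therefore invoke Theorem \ref{th1.1}$(b)$ verbatim, with $b_0=-\infty$, $a_{m+1}=\infty$, and $\gamma_i\in(b_i,a_{i+1})$ for $i=0,\ldots,m$, and the Whitney--sum decomposition $\R\times\R^n=\mathcal W_0\oplus\mathcal W_1\oplus\ldots\oplus\mathcal W_{m+1}$ follows immediately. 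The main obstacle is the local constancy of $k_\gamma$ on $\rho(A)$ used in the nonemptyness argument; I would handle it via the integral manifold description of $\operatorname{Im}P_\gamma$ and the monotonicity of $\mathcal U_\gamma$ in $\gamma$ rather than by a direct perturbation analysis of the dichotomy constants.
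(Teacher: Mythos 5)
Your proof is correct and follows essentially the same strategy as the paper. For boundedness you use the threshold $\gamma>a+\e$ where the paper uses the slightly more conservative $\gamma>a+2\e$; both are valid since the required inequalities $\alpha+\mu<0$ and $\max\{\mu,\nu\}\le\min\{-\alpha,\beta\}$ can be met for $\gamma>a+\e$ with $P=I$, $\mu=\e$, $\nu=0$. For nonemptiness the paper exhibits a concrete spectral point $\gamma^*=\sup\{\gamma\in\rho(A):\mathcal V_\gamma=\R\times\R^n\}$ and rules out $\gamma^*\in\rho(A)$ via Proposition \ref{po2.3}; your connectedness argument is the abstract form of the same idea, resting on the identical key lemma (local constancy of the rank of the invariant projection on $\rho(A)$, which you correctly reduce to Propositions \ref{po2.2} and \ref{po2.3}). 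Your handling of part $(b)$ — observing that boundedness forces the extreme spectral intervals to be compact, so Theorem \ref{th1.1}$(b)$ applies directly — matches the paper.
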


For autonomous linear systems in $\R^n$ it is well known that they
can be transformed into normal forms with their coefficient matrices in the Jordan type through some nondegenerate linear
changes of variables. Using the dichotomy spectrum Siegmund
\cite{Si02JLMS} provided a method to study the normal forms of
nonautonomous linear systems. Here we extend his method to study
the normal form of nonautonomous linear differential system using
the nonuniform dichotomy spectrum.

As first defined in \cite{Sie2014}, we say that system \eqref{1.1} and the system
\begin{equation}\label{1.4}
\dot y=B(t)y,
\end{equation}
are {\it nonuniformly kinematically similar} if there exists a
differentiable matrix function $S:\R\rightarrow {\rm GL}_n(\R)$
satisfying
\begin{equation}\label{enks}
  \|S(t)\|\le M_\varepsilon e^{\varepsilon |t|},\quad
 \|S(t)^{-1}\|\le M_\varepsilon e^{\varepsilon |t|},\qquad \mbox{ for all } t\in\mathbb R,
\end{equation}
with $M_\varepsilon>0$ a constant, such that $x(t)=S(t)y(t)$ transforms \eqref{1.1} into \eqref{1.4}. Correspondingly, the $S(t)$ satisfying \eqref{enks} is called a {\it nonuniform Lyapunov matrix}, and the change of variables $x(t)=S(t)y(t)$ is a {\it nonuniform Lyapunov transformation}.

The following result characterizes the normal forms of
nonautonomous linear differential systems via their nonuniform
dichotomy spectrums.

\begin{theorem}\label{th1.3}
Assume that $A(t)$ is differentiable, and that the evolution
operator of system \eqref{1.1} has a nonuniformly bounded growth.
Let $\Sigma(A)=[a_1,b_1]\cup \ldots \cup [a_m,b_m]$ with $-\infty<a_1\le
b_1< \ldots < a_m\le b_m<\infty $ be the nonuniform dichotomy spectrum. Then system \eqref{1.1} is
nonuniformly kinematically similar to
\begin{equation}\label{1.5}
\dot y=\left(\begin{array}{ccccc}
B_0(t) &          &        &              &\\
       & B_1(t)   &        &              &\\
       &          & \ddots &             &\\
       &          &        &   B_{m}(t)  &  \\
      &           &       &            & B_{m+1}(t)
 \end{array}
 \right)y,
\end{equation}
where $B_i(t):\,\R\rightarrow \R^{n_i\times n_i}$ are
differentiable with $n_i={\rm dim}\,\mathcal W_i$, $\Sigma(B_0)=\Sigma(B_{m+1})=\emptyset$ and
$\Sigma(B_i)=[a_i,b_i]$ for $i=1,\dots,m$. Recall that $\mathcal W_0,\mathcal W_1,\ldots,\mathcal W_{m},\mathcal W_{m+1}$ are the
corresponding spectral manifolds.
\end{theorem}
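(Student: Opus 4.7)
The plan is to follow the strategy of Siegmund \cite{Si02JLMS} for the uniform case, replacing each uniform estimate by its nonuniform analogue coming from Theorems \ref{th1.1} and \ref{th1.2}. The proof splits into three tasks: (i) constructing a change of variables $S(t)$ whose column blocks span the spectral fibers $\mathcal W_i(t)$; (ii) showing $S(t)$ is a nonuniform Lyapunov matrix so that $x=S(t)y$ brings \eqref{1.1} into the block diagonal form \eqref{1.5}; and (iii) identifying the spectrum of each diagonal block.

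For (i)--(ii), Theorem \ref{th1.2} yields the pointwise splitting $\R^n=\mathcal W_0(t)\oplus\cdots\oplus\mathcal W_{m+1}(t)$ with fiber dimensions $n_i$ constant in $t$. Starting from any basis of $\mathcal W_i(0)$ and evolving it forward by $\Phi(t,0)$ gives a basis of $\mathcal W_i(t)$ depending differentiably on $t$; a fiberwise Gram--Schmidt (equivalently, a $QR$ decomposition of $\Phi(t,0)|_{\mathcal W_i(0)}$) produces a differentiable orthonormal frame of $\mathcal W_i(t)$, and the concatenation of these frames yields $S(t)\in \mathrm{GL}_n(\R)$. Block--wise orthonormality forces $\|S(t)\|\le\sqrt{n}$. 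The bound on $\|S(t)^{-1}\|$ reduces to estimating the invariant projections $Q_i(t)$ onto $\mathcal W_i(t)$ along $\bigoplus_{j\ne i}\mathcal W_j(t)$: writing $Q_i(t)=\Phi(t,0)Q_i(0)\Phi(0,t)$ and inserting the nonuniform exponential dichotomy bounds for the shifted systems $\dot x=(A(t)-\gamma_j I)x$ together with the nonuniformly bounded growth \eqref{1.3}, one obtains $\|Q_i(t)\|\le C_\varepsilon e^{\varepsilon|t|}$ with $\varepsilon>0$ arbitrarily small, exactly what \eqref{enks} demands. The induced change of variables gives $\dot y=B(t)y$ with $B(t)=S(t)^{-1}(A(t)S(t)-\dot S(t))$ differentiable, and invariance of each $\mathcal W_i$ under the flow of \eqref{1.1} forces $B(t)$ to be block diagonal with differentiable blocks $B_i(t)\in\R^{n_i\times n_i}$.

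For (iii), the key auxiliary fact to prove is that nonuniform kinematical similarity preserves nonuniform dichotomy spectra: if \eqref{1.1} and \eqref{1.4} are related via $S$ satisfying \eqref{enks} and $A(t)-\gamma I$ admits a nonuniform exponential dichotomy with constants $(\alpha,\beta,\mu,\nu,K)$, then so does $B(t)-\gamma I$ with constants roughly $(\alpha,\beta,\mu+2\varepsilon,\nu+2\varepsilon,KM_\varepsilon^2)$; choosing $\varepsilon$ small preserves the extra conditions, giving $\Sigma(A)=\Sigma(B)=\bigcup_i\Sigma(B_i)$. For any $\gamma\in\rho(A)$ in a gap adjacent to $[a_i,b_i]$, the dichotomy projection of $\dot x=(A(t)-\gamma I)x$ is $Q_0+\cdots+Q_k$ for the appropriate $k$, which after the change of variables is a standard block projection; it therefore restricts to a dichotomy for each individual $B_j(t)-\gamma I$. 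This yields $\Sigma(B_i)\subseteq[a_i,b_i]$, and a missing $\gamma\in[a_i,b_i]\setminus\Sigma(B_i)$ combined with the dichotomies on the other blocks would contradict $\gamma\in\Sigma(A)$, so equality holds. The same argument on $\mathcal W_0$ and $\mathcal W_{m+1}$, where every $\gamma\in\R$ lies in an unbounded gap, gives $\Sigma(B_0)=\Sigma(B_{m+1})=\emptyset$.

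The main obstacle lies in simultaneously controlling the nonuniform growth of $S(t)$ in step (ii) and tracking how the dichotomy exponents $\mu,\nu$ grow under the Lyapunov transformation in step (iii), while ensuring that the extra conditions $\alpha+\mu<0$, $\beta-\nu>0$ and $\max\{\mu,\nu\}\le\min\{-\alpha,\beta\}$ persist throughout. This bookkeeping is precisely where the nonuniformly bounded growth hypothesis is indispensable, distinguishing the nonuniform from the uniform setting treated in \cite{Si02JLMS}.
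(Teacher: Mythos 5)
Your strategy is sound but genuinely different from the paper's. The paper proceeds \emph{inductively}, peeling off one spectral interval at a time: for $\gamma_0<a_1$ it takes the single invariant projection $\widetilde P_0$ of $\dot x=(A(t)-\gamma_0 I)x$, applies Lemma~\ref{le3.4} (which in turn uses the positive-definite square-root construction of Lemma~\ref{le3.3} taken from \cite{Si02JLMS,Sie2014}) to build a nonuniform Lyapunov matrix $S_0$ with $S_0^{-1}\widetilde P_0 S_0=P_0$ constant, obtains a $2\times2$ block form, and then repeats on the lower-right block through the successive spectral gaps. You instead do a \emph{one-shot} reduction: take the full Whitney decomposition $\R^n=\bigoplus_i\mathcal W_i(t)$ from Theorem~\ref{th1.2}, produce an orthonormal frame of each $\mathcal W_i(t)$ by a $QR$-decomposition of $\Phi(t,0)|_{\mathcal W_i(0)}$, and concatenate. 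Both constructions are legitimate; the paper's square-root trick buys a clean two-sided bound (Lemma~\ref{le3.3}(c)) where $\|S\|\le\sqrt2$ and $\|S^{-1}\|$ is controlled directly by the projection norms, while your Gram--Schmidt gives $\|S\|\le\sqrt n$ trivially and pushes all the difficulty into bounding $\|S^{-1}\|\le\bigl(\sum_i\|Q_i(t)\|^2\bigr)^{1/2}$. Your step (iii) parallels the paper's use of Lemma~\ref{le3.2} and the observation that the block projection localizes the dichotomy to each diagonal block.

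One claim in step (ii) is an overclaim and, as written, is not obtainable by the route you indicate. You assert $\|Q_i(t)\|\le C_\varepsilon e^{\varepsilon|t|}$ with $\varepsilon>0$ \emph{arbitrarily small}, by writing $Q_i(t)=\Phi(t,0)Q_i(0)\Phi(0,t)$ and invoking the bounded-growth estimate \eqref{1.3}. That route only yields an exponent of order $2a+\e$ (the growth rate), which is far from small and does not give a nonuniform Lyapunov matrix in any useful sense. The correct estimate comes from the dichotomy itself: writing $Q_i=P_{\gamma_i}-P_{\gamma_{i-1}}$ and specializing the dichotomy inequalities to $t=s$ gives $\|P_{\gamma_j}(t)\|\le K_j e^{\mu_j|t|}$, hence $\|Q_i(t)\|\le C\,e^{\max_j\mu_j\,|t|}$. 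This exponent is \emph{not} arbitrarily small --- it is whatever the nonuniform constants $\mu_j,\nu_j$ happen to be. The definition of nonuniform Lyapunov matrix in \eqref{enks} is satisfied with $\varepsilon=\max_j\{\mu_j,\nu_j\}$, and it is exactly the extra standing hypotheses $\alpha+\mu<0$, $\beta-\nu>0$, $\max\{\mu,\nu\}\le\min\{-\alpha,\beta\}$ in the definition of nonuniform exponential dichotomy that make this $\varepsilon$ small enough relative to the spectral gaps for Lemma~\ref{le3.2} to apply. With that correction the rest of your argument (which indeed has to track how $\mu,\nu$ degrade under the change of variables, as you note in your last paragraph) goes through; but as stated the ``arbitrarily small $\varepsilon$'' shortcut is not available and would hide the actual constraint that makes the nonuniform setting harder than \cite{Si02JLMS}.
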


Now we use the nonuniform dichotomy spectrums and the normal forms
for nonautonomous linear differential systems to study the normal forms of  nonautonomous
nonlinear differential systems.

Consider the  nonautonomous nonlinear differential system
\begin{equation}\label{1.6}
\dot x=A(t)x+f(t,x),\quad x\in (\mathbb R^n,0),
\end{equation}
where $f(t,x)=O(|x|^2)$ is an analytic function.

Assume that the evolution operator of the linear system $\dot
x=A(t)x$ has a nonuniformly bounded growth. Then its nonuniform
dichotomy spectrum is $\Sigma(A)=[a_1,b_1]\cup\ldots\cup
[a_m,b_m]$ with $m\ge 1$ and $-\infty<a_1\le b_1<\ldots<a_m\le
b_m<\infty$. Hence by Theorem \ref{th1.3}  system \eqref{1.6} is
equivalent to
\begin{equation}\label{1.7}
\dot x_i=A_i(t) x_i+f_i(t,x),\quad i=1,\ldots,m,
\end{equation}
where $A_i$ is an $n_i\times n_i$ matrix with $n_1+\ldots+n_m=n$
and $\Sigma(A_i)=[a_i,b_i]$ for $i=1,\ldots,m$.
So there exist $\varepsilon_i>0$ suitably small such that for $\sigma_i\in [a_i-\varepsilon_i,a_i)$ and $\rho_i\in (b_i,b_i+\varepsilon_i]$ systems $\dot z=(A_i(t)-\sigma_i I)z$ and $\dot
z=(A_i(t)-\rho_i I)z$ admit nonuniform exponential dichotomies.
Hence there exist $K_i\ge 1$, $\alpha_i<0$, $\beta_i>0$, and
$\mu_i,\nu_i\ge 0$ with $\alpha_i+\mu_i<0$ and $\beta_i-\nu_i>0$
such that
\begin{equation}\label{4.2100}
\begin{array}{l}
\|\Phi_{A_i}(t,s)\|\le
K_ie^{\rho_i(t-s)}e^{\alpha_i(t-s)+\mu_i|s|}\quad \mbox{\rm for } t\ge s,\\
\|\Phi_{A_i}(t,s)\|\le
K_ie^{\sigma_i(t-s)}e^{\beta_i(t-s)+\nu_i|s|}\quad \mbox{\rm for }
t\le s.
\end{array}
\end{equation}

In what follows we study only system \eqref{1.7}.
Expanding $f_i(t,x)$ in the Taylor series
\[
f_i(t,x)\sim\sum\limits_{|l|=2}\limits^{\infty}f_{il}(t)x^l, \quad
i=1,\ldots,m,
\]
where $l=(l_1,\ldots,l_n)\in\Z_+^n$ are  multiple indices with
$\mathbb Z_+=\mathbb N\cup\{0\}$, $x^l=x_1^{l_1}\ldots x_n^{l_n}$
and $|l|=l_1+\ldots+l_n$.

In \eqref{1.7} a monomial, say $f_{il}(t)x^l$, is {\it nonresonant} if
\[
[a_i,b_i]\cap\sum\limits_{j=1}\limits^m\tau_j[a_j,b_j]=\emptyset,
\]
where the sum and the multiplication of intervals are defined as
\[
[a,b]+[c,d]=[a+c,b+d],\qquad  k [a,b]=[k a,k b],
\]
and $\tau=(\tau_1,\ldots,\tau_m)$ is the image of
$l=(l_1,\ldots,l_n)\in\Z_+^n$ under the mapping
\begin{eqnarray*}
\mathcal N:\R^n & \longrightarrow & \R^m\\
l\,\,\,  & \longrightarrow &
\tau=(l_1+\ldots+l_{n_1},l_{n_1+1}+\ldots+l_{n_1+n_2},\ldots,l_{n-n_{m}+1}+\ldots+l_{n}).
\end{eqnarray*}

The notion nonresonance for nonautonomous differential systems is
an extension of the one for autonomous system $\dot x=Ax+f(x)$,
where the nonresonant condition is
\[
\lambda_i\ne \sum\limits_{j=1}\limits^nk_j\lambda_j,\quad
k=(k_1,\ldots,k_n)\in\Z_+^n, \,\, |k|\ge 2,
\]
with $\lambda=(\lambda_1,\ldots,\lambda_n)$ the eigenvalues of the
constant matrix $A$.

We say that system \eqref{1.7} is in the {\it normal form} if its
nonlinear terms are all resonant. The transformation sending
\eqref{1.7} to its normal form is called a {\it normalization}.
Usually the normalization is not unique. If the normalization
contains only nonresonant terms, then it is called a {\it
distinguished normalization}. The corresponding normal form system
is called in the {\it distinguished normal form}. We note that for a
given differential system the Taylor expansion of its distinguished normalization is
unique. Of course, if the distinguished normalization is analytic, then itself is unique.

Let $f(t,x)=(f_1(t,x),\ldots,f_m(t,x))$ have the Taylor expansion
$ f_j(t,x)\sim\sum\limits_{s=2}\limits^\infty \widetilde f_{js}(t,x)$,
where $\widetilde f_{js}$ is a vector--valued homogeneous polynomial
of degree $s$ in $x$ with its coefficients being the functions of $t$.

\begin{theorem}\label{th1.4}
Assume that system \eqref{1.7} is analytic or $C^\infty$ and that the evolution
operator of the linear system associated with \eqref{1.7} has a
nonuniformly bounded growth. Let $\Sigma(A)=[a_1,b_1]\cup \ldots
\cup [a_m,b_m]$ with $-\infty<a_1\le b_1< \ldots < a_m\le b_m<\infty$ be the
nonuniform dichotomy spectrum, and let $\alpha_i,\beta_i,\mu_i $ and $\nu_i$ be the data defined in \eqref{4.2100}. Set $\varrho=\max\{\mu_i,\nu_i;\, i=1,\ldots,m\}$, and $\sigma=\min\{-\alpha_j,\beta_j,\,j=1,\ldots,m\}$. If $\sigma/\varrho>4$ and there exists a positive number $k\in(3,\sigma/\varrho)$
such that the coefficient vectors of
$\widetilde f_s=(\widetilde f_{1s},\ldots,\widetilde f_{ms})$ according to the base $\{x^{\tau}e_j:\,\tau\in\mathbb Z_+^n,|\tau|=s,j=1,\ldots,n\}$, denoted by $p_s(t)$, satisfy
\begin{equation}\label{eth141}
\|p_s(t)\|\le d_s\exp\left(-\left((s-1)k-\frac{(s+3)(s-2)}{2}\right)\varrho |t|\right),\quad\mbox{ for } 2\le s<2k-4,
\end{equation}
then there exists a near identity polynomial map of degree $2k-5$ under which system \eqref{1.7} is transformed into
\begin{equation}\label{1.71}
\dot y=Ay+g(t,y)+h(t,y),
\end{equation}
where $g(t,y)$ consists of the resonant homogeneous polynomials in $y$ of degrees from $2$ to $2k-5$ with coefficients being the functions of $t$, which are uniformly convergent to zero when $|t|\rightarrow \infty$, and $h(t,y)=O(|y|^{2k-4})$.
\end{theorem}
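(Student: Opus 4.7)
\textbf{Proof plan for Theorem \ref{th1.4}.} The plan is to proceed by induction on the homogeneous degree $s$, running from $s=2$ to $s=2k-5$, and at each step to apply a near-identity polynomial substitution $y=x-H_s(t,x)$ with $H_s$ homogeneous of degree $s$ in $x$, chosen so as to remove the nonresonant part of the current degree-$s$ term while leaving a resonant remainder. Writing the system at the beginning of step $s$ as $\dot x=A(t)x+g_2(t,x)+\cdots+g_{s-1}(t,x)+\widetilde f_s(t,x)+O(|x|^{s+1})$, the standard computation gives the homological equation
\begin{equation*}
\partial_t H_s(t,x)+(DH_s(t,x))A(t)x-A(t)H_s(t,x)=\widetilde f_s^{nr}(t,x),
\end{equation*}
where $\widetilde f_s^{nr}$ is the projection of $\widetilde f_s$ onto nonresonant monomials $x^{\tau}e_j$ with $|\tau|=s$, and the resonant part becomes the degree-$s$ piece of $g(t,y)$ in \eqref{1.71}.

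The key technical device is to lift the linear system to the finite-dimensional space $V_s$ of $\R^n$-valued $s$-homogeneous polynomials in $x$. On the one-dimensional line spanned by $x^{\tau}e_j$, the dichotomy estimates \eqref{4.2100} together with Theorem \ref{th1.3} give a nonuniform exponential dichotomy whose exponent interval is the Minkowski combination $\mathcal N(\tau)\cdot[a,b]-[a_j,b_j]$, and whose nonuniform factor is of order $(|\tau|+1)\varrho=(s+1)\varrho$, inherited additively from the $s+1$ factors appearing in the evolution of a degree-$s$ monomial. Nonresonance for $(\tau,j)$ is precisely the condition that $0$ lies outside this interval, and then the lifted system on $V_s^{nr}$ admits a nonuniform exponential dichotomy whose hyperbolic gap dominates the nonuniform part, thanks to $\sigma/\varrho>k>3$. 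Solve the homological equation by the Lyapunov--Perron formula
\begin{equation*}
H_s(t,x)=-\int_t^{\infty}\Phi_s^-(t,r)\widetilde f_s^{nr}(r,x)\,dr+\int_{-\infty}^{t}\Phi_s^+(t,r)\widetilde f_s^{nr}(r,x)\,dr,
\end{equation*}
where $\Phi_s^{\pm}$ are the stable/unstable pieces of the lifted evolution. Convergence of both integrals is forced by the coefficient bound on $p_s(t)$ in \eqref{eth141}: the hyperbolic rate in $\Phi_s^{\pm}$ combined with the decay of $\|p_s(r)\|$ as $|r|\to\infty$ produces an integrable integrand, and yields a polynomial $H_s$ whose coefficients obey a bound of the same exponential form as $p_s$ but with a strictly smaller decay rate.

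For the resonant monomials, the very same bound \eqref{eth141} on $\|p_s(t)\|$ forces their coefficients to tend uniformly to zero as $|t|\to\infty$, because the exponent $(s-1)k-(s+3)(s-2)/2$ is positive for every $s$ in the admissible range $2\le s<2k-4$ (use $k>3$). Substituting $y=x-H_s(t,x)$ produces a new system whose degrees $2,\dots,s$ are in resonant normal form, whose degree-$s$ nonresonant piece has been killed, and whose higher-order terms are a sum of (i) the old higher-order terms, (ii) contributions from $(DH_s)(g_2+\cdots+\widetilde f_s+\mathrm{h.o.t.})$, and (iii) chain-rule remainders. The exponential decay rates of (i)--(iii) are controlled by multiplying the bounds on $p_\ell$ for $\ell\le s$, so the polynomial $(s+3)(s-2)/2$ appearing in \eqref{eth141} is engineered exactly so that the hypothesis passes to the next degree $s+1$; this bookkeeping is a direct (if tedious) computation that runs cleanly because $k<\sigma/\varrho$ leaves a positive budget at every intermediate step. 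After completing the inductive removal for $s=2,\dots,2k-5$ and collecting remainders, we obtain \eqref{1.71}.

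\textbf{Main obstacle.} The delicate point is the propagation of the nonuniform factor across the $2k-6$ induction steps. Each application of the Lyapunov--Perron integral on the lifted system of degree $s$ injects an extra $(s+1)\varrho$ into the nonuniform growth exponent of $H_s$, and these losses accumulate quadratically in $s$, producing precisely the $(s+3)(s-2)/2$ correction in \eqref{eth141}. The condition $\sigma/\varrho>4$ with $k\in(3,\sigma/\varrho)$ is what keeps the hyperbolic gap in the lifted dichotomy ahead of this accumulated nonuniform growth; verifying that the coefficient estimate at degree $s+1$ inherits the form \eqref{eth141} after substitution, uniformly in $s<2k-4$, is the main piece of bookkeeping and the substantive content of the proof.
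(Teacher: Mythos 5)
Your proposal follows essentially the same route as the paper: reduce to the homological equation, lift to the space of $\R^n$-valued homogeneous polynomials via tensor/Kronecker product of evolution operators, solve the nonresonant blocks by one-sided (Lyapunov--Perron-type) integrals that converge thanks to the nonresonance gap and the decay of $p_s(t)$, and run an induction in degree whose bookkeeping is governed by the quadratic correction $(s+3)(s-2)/2$ matching the accumulated nonuniform loss $(s+1)\varrho$ per step. The only cosmetic differences are that the paper writes a single formal change of variables solved degree by degree rather than a composition of step-by-step substitutions, and uses a separate one-sided integral on each nonresonant block rather than the packaged two-sided Lyapunov--Perron formula, but these are equivalent reformulations.
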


In the last theorem we have several restricted conditions. We should say that except the one on the modulus $\|p_s(t)\|$, the others are natural.
For instance $\sigma/\varrho>4$ holds provided that the nonuniform exponents $\mu_i,\nu_i$ are sufficiently small. The condition on the modulus $\|p_s(t)\|$ is also natural in some sense, because if $\|p_s(t)\|$ increases
too fast as $|t|$ increases, any orbit starting in a small neighborhood of the origin will rapidly leave the neighborhood,  and so the theorem will not be correct. If $\varrho=0$ we are in the case of the uniform dichotomy spectrum.

We mention that if an analytic or a $C^\infty$ system \eqref{1.7} has its linear part satisfying a nonuniform exponential dichotomy, it is nearly impossible to get an analytic or a $C^\infty$ normalization which transforms system \eqref{1.7} to its normal form (of course, if system \eqref{1.7} is a polynomial one, the normalization may exist). Also Theorem \ref{th1.4} holds for $C^{2k-4}$ differential systems. These can be seen from the proof of Theorem \ref{th1.4}.

We also mention that even for a $C^k$ $(2<k<\infty)$ smooth autonomous differential system of form \eqref{1.7}, if $n>2$ there is no satisfactory results on the regularity of the normalization which transforms system \eqref{1.7} to a polynomial normal form. For $n=2$ this problem was solved by Stowe \cite{Stjde}.

This paper is organized as follows. In the next section we shall prove Theorems \ref{th1.1} and \ref{th1.2}. The proof of Theorems \ref{th1.3} and \ref{th1.4} will be given in Sections \ref{s3} and \ref{s4}, respectively.

\section{Proof of Theorems \ref{th1.1} and \ref{th1.2}}\label{s2}

\setcounter{section}{2}
\setcounter{equation}{0}\setcounter{theorem}{0}

For proving Theorems \ref{th1.1} and \ref{th1.2} we need some
basic results which characterize the nonuniform dichotomy
spectrum. The ideas of the proofs partially follow  from
\cite{Si02JDDE}.

\subsection{The basic results}

This subsection is a preparation for proving Theorems \ref{th1.1}
and \ref{th1.2}.

\begin{proposition}\label{po2.1}
Let $\mathcal U_\gamma,\,\mathcal V_\gamma$ be the subsets of
$\R\times\R^n$ defined in \eqref{1.2}. The following statements
hold.
\begin{itemize}
\item[$(i)$] $\mathcal U_\gamma$ and $\mathcal V_\gamma$ are
linear integral manifolds of system \eqref{1.1}

\item[$(ii)$] If $\gamma_1\le \gamma_2$ then $\mathcal
U_{\gamma_1}\subseteq \mathcal U_{\gamma_2}$ and $\mathcal
V_{\gamma_1}\supseteq \mathcal V_{\gamma_2}$.
\end{itemize}
\end{proposition}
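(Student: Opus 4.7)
The plan is to verify that each of $\mathcal{U}_\gamma$ and $\mathcal{V}_\gamma$ satisfies the two defining properties of a linear integral manifold -- orbit-invariance and linearity of fibers -- and then to deduce the monotonicity in part $(ii)$ from a direct pointwise comparison of exponential weights.

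For part $(i)$, I would treat $\mathcal{U}_\gamma$ first; the argument for $\mathcal{V}_\gamma$ is verbatim with the supremum taken over $t\le 0$. To check orbit-invariance, suppose $(\tau,\xi)\in\mathcal{U}_\gamma$ and fix an arbitrary $s\in\R$; set $\eta=\Phi(s,\tau)\xi$. Using the cocycle identity $\Phi(t,s)\Phi(s,\tau)=\Phi(t,\tau)$,
\[
\sup_{t\ge 0}\|\Phi(t,s)\eta\|\,e^{-\gamma t}=\sup_{t\ge 0}\|\Phi(t,\tau)\xi\|\,e^{-\gamma t}<\infty,
\]
so $(s,\Phi(s,\tau)\xi)\in\mathcal{U}_\gamma$, which gives invariance of the whole orbit through $(\tau,\xi)$. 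For linearity of the fiber $\mathcal{U}_\gamma(\tau)$, I would invoke linearity of $\xi\mapsto\Phi(t,\tau)\xi$ together with the triangle inequality: for $\xi_1,\xi_2\in\mathcal{U}_\gamma(\tau)$ and scalars $a,b$,
\[
\|\Phi(t,\tau)(a\xi_1+b\xi_2)\|\,e^{-\gamma t}\le |a|\,\|\Phi(t,\tau)\xi_1\|\,e^{-\gamma t}+|b|\,\|\Phi(t,\tau)\xi_2\|\,e^{-\gamma t},
\]
and the supremum of the right-hand side over $t\ge 0$ is finite. The origin trivially lies in every fiber, so $\mathcal{U}_\gamma$ is nonempty and each fiber is a linear subspace of $\R^n$.

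For part $(ii)$, the monotonicity reduces to a pointwise comparison of weights. If $\gamma_1\le\gamma_2$ then for $t\ge 0$ we have $e^{-\gamma_2 t}\le e^{-\gamma_1 t}$, whence $\|\Phi(t,\tau)\xi\|\,e^{-\gamma_2 t}\le\|\Phi(t,\tau)\xi\|\,e^{-\gamma_1 t}$; finiteness of the $\gamma_1$-weighted supremum therefore forces finiteness of the $\gamma_2$-weighted one, giving $\mathcal{U}_{\gamma_1}\subseteq\mathcal{U}_{\gamma_2}$. For $\mathcal{V}_\gamma$ the inequality between the exponentials reverses on $t\le 0$, because $-\gamma_2 t\ge -\gamma_1 t$ there, which yields $\mathcal{V}_{\gamma_2}\subseteq\mathcal{V}_{\gamma_1}$ as required.

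I do not foresee any substantial obstacle: the argument is essentially bookkeeping from the cocycle property and the monotonicity of the exponential. The only small subtlety worth flagging is that the suprema in \eqref{1.2} are taken over the fixed half-lines $t\ge 0$ and $t\le 0$ rather than over $t\to\pm\infty$ from the base point $\tau$. After applying the cocycle identity the supremum in the translated expression still runs over the same half-line, so this does not affect the invariance argument, but one must be careful not to silently reindex the exponential weight when moving the base point.
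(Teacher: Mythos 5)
Your proposal is correct and follows essentially the same route as the paper's own proof: part $(i)$ via the cocycle identity $\Phi(t,s)\Phi(s,\tau)=\Phi(t,\tau)$ applied inside the supremum, and part $(ii)$ via the pointwise comparison of $e^{-\gamma_1 t}$ and $e^{-\gamma_2 t}$ on each half-line. You include a short explicit verification that each fiber is a linear subspace, which the paper leaves implicit, but this is a presentation detail rather than a different argument.
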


\begin{proof} $(i)$ For any $(\tau,\xi)\in\mathcal U_\gamma$, by
definition we only need to prove $(s,\Phi(s,\tau)\xi)\in\mathcal
U_\gamma$ for all $s\in\R$. In fact, it follows from the fact that
\[
\sup\limits_{t\ge 0}\|\Phi(t,s)\Phi(s,\tau)\xi\|e^{-\gamma t}=
\sup\limits_{t\ge 0}\|\Phi(t,\tau)\xi\|e^{-\gamma t}<\infty.
\]
The proof for $\mathcal V_{\gamma}$ follows from the same
arguments as those for $\mathcal U_{\gamma}$.

\noindent $(ii)$ The claim $\mathcal U_{\gamma_1}\subseteq\mathcal
U_{\gamma_2}$ follows easily from $-\gamma_1 t\ge -\gamma_2 t$ for
$t\ge 0$. A similar argument works with $\mathcal V_{\gamma}$.
\end{proof}

\begin{proposition}\label{po2.2}
For $\gamma\in\R$, if
\begin{equation}\label{2.1}
\dot x=(A(t)-\gamma I)x,
\end{equation}
admits a nonuniform exponential dichotomy with an invariant
projection $P$, then we have
\[
\mathcal U_\gamma=\mbox{\rm Im} P,\quad \mathcal
V_\gamma=\mbox{\rm Ker} P\,\,\,\, \mbox{ and } \,\,\,\, \mathcal
U_\gamma\oplus \mathcal V_\gamma=\R\times \R^n,
\]
where $\mbox{\rm Im}P$ and ${\rm Ker}P$ denote the image and
kernel of the projection $P$, respectively.
\end{proposition}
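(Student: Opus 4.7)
The plan is to unwind the nonuniform exponential dichotomy hypothesis for the shifted system and verify the two set equalities fiberwise. Since the evolution operator of $\dot x=(A(t)-\g I)x$ is $\Psi(t,s)=e^{-\g(t-s)}\Phi(t,s)$, the dichotomy estimates for $\Psi$ translate into
\[
\|\Phi(t,s)P(s)\|\le Ke^{(\a+\g)(t-s)+\mu|s|}\ (t\ge s),\quad \|\Phi(t,s)(I-P(s))\|\le Ke^{(\b+\g)(t-s)+\nu|s|}\ (t\le s),
\]
where $\a<0<\b$, $\a+\mu<0$ and $\b-\nu>0$ are supplied by the definition of a nonuniform exponential dichotomy.

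For $\mbox{Im}\,P\subseteq\mathcal U_\g$, I would fix $(\tau,\xi)$ with $P(\tau)\xi=\xi$ and, after multiplying the first estimate at $s=\tau$ by $e^{-\g t}$, read off
$\|\Phi(t,\tau)\xi\|e^{-\g t}\le K\|\xi\|e^{\a(t-\tau)+\mu|\tau|-\g\tau}$,
which is uniformly bounded on $t\ge\max\{0,\tau\}$ because $\a<0$; continuity handles the compact piece $[0,\tau]$ when $\tau>0$. For the reverse inclusion I would split $\xi=P(\tau)\xi+(I-P(\tau))\xi=\xi_1+\xi_2$; since $\xi_1\in\mbox{Im}\,P(\tau)\subseteq\mathcal U_\g(\tau)$ by what has just been shown, any $\xi\in\mathcal U_\g(\tau)$ forces $\xi_2\in\mathcal U_\g(\tau)$. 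Invariance gives $\Phi(t,\tau)\xi_2\in\mbox{Ker}\,P(t)$, and applying the second dichotomy estimate to the identity $\xi_2=\Phi(\tau,t)(I-P(t))\Phi(t,\tau)\xi_2$ yields the \emph{lower} bound
\[
\|\Phi(t,\tau)\xi_2\|e^{-\g t}\ge K^{-1}\|\xi_2\|\exp\bigl((\b-\nu)t-\b\tau-\g\tau\bigr)\qquad\mbox{for }t\ge\max\{0,\tau\},
\]
so $\b-\nu>0$ makes the left-hand side blow up as $t\to\infty$ unless $\xi_2=0$.

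The identification $\mathcal V_\g=\mbox{Ker}\,P$ is entirely symmetric: one runs the same two-step argument with $t\to-\infty$, using the second estimate together with $\b>0$ for the easy inclusion, and a lower bound on $\|\Phi(t,\tau)P(\tau)\eta\|e^{-\g t}$ that diverges as $t\to-\infty$ by virtue of $\a+\mu<0$ to kill any nonzero component in $\mbox{Im}\,P(\tau)$. The Whitney sum $\mathcal U_\g\oplus\mathcal V_\g=\R\times\R^n$ is then immediate from the pointwise algebraic decomposition $\R^n=\mbox{Im}\,P(\tau)\oplus\mbox{Ker}\,P(\tau)$ valid for any projection. I expect the only real bookkeeping obstacle to be tracking signs in the nonuniform weights $\mu|s|$, $\nu|s|$ with $s=\tau$ of either sign, and making explicit that the ``extra conditions'' $\a+\mu<0$ and $\b-\nu>0$ emphasized by the authors are precisely what force the crossed components to vanish and thus pin down the fibers of $\mathcal U_\g$ and $\mathcal V_\g$.
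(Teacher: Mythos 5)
Your proposal is correct and takes essentially the same route as the paper: both identify $\mathcal U_\gamma$ and $\mathcal V_\gamma$ with $\mbox{Im}\,P$ and $\mbox{Ker}\,P$ by showing one inclusion via a direct estimate and the other by isolating the complementary component with the invariance identity $\xi_2=\Phi(\tau,t)(I-P(t))\Phi(t,\tau)\xi_2$ (resp.\ its $P$-analogue) and then using $\beta-\nu>0$ (resp.\ $\alpha+\mu<0$) to force that component to vanish. The only cosmetic difference is that you phrase the contradiction as a divergent lower bound rather than an upper bound tending to zero, and you prove the inclusions in the opposite order, which changes nothing of substance.
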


\begin{proof}
Let $\Phi(t,s)$ be the evolution operator of $\dot x=A(t)x$. Some
easy calculations show that
$\Phi_\gamma(t,s)=e^{-\gamma(t-s)}\Phi(t,s)$ is an evolution
operator of \eqref{2.1}, and that $P(t)$ is an invariant
projection of $\Phi(t,\tau)$ if and only if it is an invariant
projection of $\Phi_\gamma(t,\tau)$. By the assumption there exist
$K_\gamma\ge 1$, $\alpha_\gamma<0,\beta_\gamma>0$ and
$\mu_\gamma,\nu_\gamma\ge 0$ with $\alpha+\mu_\gamma<0$ and
$\beta-\nu_\gamma>0$ such that
\begin{eqnarray*}
\|\Phi_\gamma(t,s)P(s)\|&\le &K_\gamma
e^{\alpha_\gamma(t-s)+\mu_\gamma|s|} \mbox{
for all } t\ge s,\\
\|\Phi_\gamma(t,s)(I-P(s))\|&\le &K_\gamma
e^{\beta_\gamma(t-s)+\nu_\gamma|s|} \mbox{ for all } t\le s.
\end{eqnarray*}

First we prove $\mathcal U_\gamma\subset{\rm Im} P$.  For any
$(\tau,\xi)\in\mathcal U_\gamma$, by definition there exists a
constant $c_\gamma$ such that
\[
\|\Phi(t,\tau)\xi\|\le c_\gamma e^{\gamma t} \mbox{ for all } t\ge
0.
\]
It follows that
\[
\|\Phi_\gamma(t,\tau)\xi\|=e^{-\gamma(t-\tau)}\|\Phi(t,\tau)\xi\|\le
c_\gamma e^{\gamma\tau} \mbox{ for all } t\ge 0.
\]
Set $\xi=\xi_1+\xi_2$ with $\xi_1\in{\rm Im}P(\tau)$ and
$\xi_2\in{\rm Ker}P(\tau)$. Since
$P(t)\Phi_\gamma(t,\tau)=\Phi_\gamma(t,\tau)P(\tau)$, we have
\[
\xi_2=(I-P(\tau))\xi=\Phi_\gamma(\tau,t)(I-P(t))\Phi_\gamma(t,\tau)\xi.
\]
These yield that for $t\ge\max\{0,\tau\}$
\[
\|\xi_2\|=K_\gamma
e^{\beta_\gamma(\tau-t)+\nu_\gamma|t|}\|\Phi_\gamma(t,\tau)\xi\|\le
K_\gamma c_\gamma
e^{-(\beta_\gamma-\nu_\gamma)t+(\beta_\gamma+\gamma)\tau}.
\]
Hence we have $\xi_2=0$ because $\beta_\gamma-\nu_\gamma>0$, and
consequently $\xi=\xi_1\in{\rm Im}P(\tau)$. This proves that
$\mathcal U_\gamma\subset{\rm Im}P$.

For proving ${\rm Im} P\subset \mathcal U_\gamma$, we assume that
$\tau\in\R$, $\xi\in{\rm Im}P(\tau)$. Then $P(\tau)\xi=\xi$. For
$t\ge\max\{0,\tau\}$ we have
\[
\|\Phi(t,\tau)\xi\|e^{-\gamma
t}=e^{-\gamma\tau}\|\Phi_\gamma(t,\tau)P(\tau)\xi\|\le K_\gamma
e^{\alpha_\gamma t-(\gamma+\alpha_\gamma)\tau+\mu_\gamma|\tau|}.
\]
This implies that $(\tau,\xi)\in \mathcal U_\gamma$ because
$\alpha_\gamma<0$, and so ${\rm Im}P\subset \mathcal U_\gamma$.
This proves that ${\rm Im}P= \mathcal U_\gamma$.

Similarly using the assumption $\alpha_\gamma+\mu_\gamma<0$ we can
prove that $\mathcal V_\gamma={\rm Ker}P$. Finally the equality
$\mathcal U_\gamma\oplus\mathcal V_\gamma=\R\times\R^n$ follows
from $\mathcal U_\gamma={\rm Im}P$ and $\mathcal V_\gamma={\rm
Ker}P$.
\end{proof}

The next results characterize the resolvent set and the linear
integral manifolds.

\begin{proposition}\label{po2.3}
The resolvent set $\rho(A)$ is open. If $\gamma\in\rho(A)$ and
$J\subset \rho(A)$ is an interval containing $\gamma$, then
\[
\mathcal U_\eta=\mathcal U_\gamma,\quad \mathcal V_\eta=\mathcal
V_\gamma \quad \mbox{ for all } \eta\in J.
\]
\end{proposition}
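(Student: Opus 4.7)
The plan is to exploit Proposition \ref{po2.2}, which identifies $\mathcal U_\eta$ and $\mathcal V_\eta$ with the image and kernel of the dichotomy projection whenever $\eta\in\rho(A)$, combined with a simple observation: multiplying the evolution operator by a scalar $e^{-\delta(t-s)}$ does not alter the invariance relation $P(t)\Phi(t,s)=\Phi(t,s)P(s)$. Thus a small shift of $\gamma$ leaves the same projection $P$ admissible, only moving the exponents.

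\textbf{Step 1: Openness of $\rho(A)$.} I would start from the identity
\[
\Phi_\eta(t,s)=e^{-\eta(t-s)}\Phi(t,s)=e^{(\gamma-\eta)(t-s)}\Phi_\gamma(t,s).
\]
Fix $\gamma\in\rho(A)$ with dichotomy data $K,\alpha,\beta,\mu,\nu$ and projection $P$. Setting $\delta=\eta-\gamma$, the same $P$ is still invariant for $\Phi_\eta$, and the dichotomy estimates become
\[
\|\Phi_\eta(t,s)P(s)\|\le Ke^{(\alpha-\delta)(t-s)+\mu|s|},\qquad t\ge s,
\]
\[
\|\Phi_\eta(t,s)(I-P(s))\|\le Ke^{(\beta-\delta)(t-s)+\nu|s|},\qquad t\le s.
\]
For $|\delta|$ sufficiently small, the conditions $\alpha-\delta<0<\beta-\delta$, $(\alpha-\delta)+\mu<0$, $(\beta-\delta)-\nu>0$ and $\max\{\mu,\nu\}\le \min\{-(\alpha-\delta),\beta-\delta\}$ all remain valid. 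Therefore a whole neighborhood of $\gamma$ lies in $\rho(A)$, showing that $\rho(A)$ is open.

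\textbf{Step 2: Local constancy of $\mathcal U_\eta$ and $\mathcal V_\eta$.} The key observation is that the argument in Step~1 uses \emph{the same} projection $P$ for every $\eta$ in a small neighborhood of $\gamma$. Hence by Proposition \ref{po2.2},
\[
\mathcal U_\eta=\mathrm{Im}\,P=\mathcal U_\gamma,\qquad \mathcal V_\eta=\mathrm{Ker}\,P=\mathcal V_\gamma,
\]
for every $\eta$ in that neighborhood. Consequently, the maps $\eta\mapsto \mathcal U_\eta$ and $\eta\mapsto\mathcal V_\eta$ are locally constant on $\rho(A)$.

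\textbf{Step 3: Globalization along the interval $J$.} Since $J$ is connected and both $\mathcal U_\eta$ and $\mathcal V_\eta$ are locally constant on $J\subset\rho(A)$, a standard clopen argument (the sets $\{\eta\in J:\mathcal U_\eta=\mathcal U_\gamma\}$ and its complement in $J$ are both open in $J$, and the first is nonempty) forces $\mathcal U_\eta=\mathcal U_\gamma$ and $\mathcal V_\eta=\mathcal V_\gamma$ throughout $J$, which is the claim.

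\textbf{Main obstacle.} The only slightly delicate point is checking that the full package of inequalities defining a nonuniform exponential dichotomy, in particular $\max\{\mu,\nu\}\le\min\{-\alpha,\beta\}$, survives the perturbation $\alpha\mapsto\alpha-\delta$, $\beta\mapsto\beta-\delta$. This is a straightforward continuity check, but it is the reason these extra conditions were introduced in the definition given in Section~\ref{s1}, and one has to keep track of them carefully to justify that $\eta\in\rho(A)$ rather than only that the dichotomy inequalities hold.
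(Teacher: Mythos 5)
Your proof follows the same route as the paper's: fix $\gamma\in\rho(A)$ with projection $P$, observe that $\Phi_\eta(t,s)=e^{(\gamma-\eta)(t-s)}\Phi_\gamma(t,s)$ keeps $P$ invariant and merely shifts the exponents $\alpha\mapsto\alpha+(\gamma-\eta)$, $\beta\mapsto\beta+(\gamma-\eta)$, conclude $\eta\in\rho(A)$ with the same $P$, invoke Proposition~\ref{po2.2} to identify $\mathcal U_\eta=\mathrm{Im}\,P$, $\mathcal V_\eta=\mathrm{Ker}\,P$, and then globalize along $J$. Your Step~3 uses a clopen argument where the paper covers a compact subinterval $[\gamma^*,\gamma]$ by the local neighborhoods, but these are the same connectedness argument in different clothes.

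One remark on your ``main obstacle'' paragraph: you call the preservation of $\max\{\mu,\nu\}\le\min\{-\alpha,\beta\}$ a ``straightforward continuity check,'' but this inequality is non-strict in the definition, so continuity alone does not save it. For example if $\mu=\beta$ exactly, any $\delta>0$ gives $\beta-\delta<\mu=\max\{\mu,\nu\}$ and the condition fails. The paper's own proof silently verifies only $\alpha^*+\mu<0$ and $\beta^*-\nu>0$ (which are strict, hence stable under small shifts) and does not address this last condition at all, so you are no worse off than the source; but it would be cleaner either to note that the dichotomy data for $\gamma$ can be chosen with $\max\{\mu,\nu\}<\min\{-\alpha,\beta\}$ strict, or to acknowledge this as a genuine (if minor) technical point rather than label it routine.
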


\begin{proof}
For $\gamma\in\rho(A)$, by definition $\dot x=(A(t)-\gamma I)x$
admits a nonuniform exponential dichotomy with an invariant
projection $P(t)$. So there exist $K\ge 1$, $\alpha<0$, $\beta>0$
and $\mu,\nu\ge 0$ with $\alpha+\mu<0$ and $\beta-\nu>0$ such that
\begin{eqnarray*}
\|\Phi_\gamma(t,s)P(s)\|&\le& K e^{\alpha(t-s)+\mu|s|} \mbox{ for } t\ge s,\\
\|\Phi_\gamma(t,s)(I-P(s))\|&\le& K e^{\beta(t-s)+\nu|s|} \mbox{
for } t\le s.
\end{eqnarray*}
Set $0<\sigma<\min\{(\beta-\nu)/2,-(\alpha+\mu)/2\}$. For
$\eta\in(\gamma-\sigma,\gamma+\sigma)$, it is easy to see that
$P(t)$ is an invariant projection of the evolution operator
$\Phi_\eta(t,s)=e^{-\eta(t-s)}\Phi(t,s)$ of system $\dot
x=(A(t)-\eta I)x$. Moreover we have
\begin{eqnarray*}
\|\Phi_\eta(t,s)P(s)\|&=&e^{(\gamma-\eta)(t-s)}\|\Phi_\gamma(t,s)P(s)\|\le
Ke^{(\gamma-\eta+\alpha)(t-s)+\mu|s|} \,\,\mbox{ for } t\ge s,\\
\|\Phi_\eta(t,s)(I-P(s))\|&=&e^{(\gamma-\eta)(t-s)}\|\Phi_\gamma(t,s)(I-P(s))\|\le
Ke^{(\gamma-\eta+\beta)(t-s)+\nu|s|} \,\,\mbox{ for } t\le s.
\end{eqnarray*}
It follows from the choice of $\sigma$ and $\eta$ that
$\alpha^*=\gamma-\eta+\alpha\le \alpha^*+\mu<0$ and
$\beta^*=\gamma-\eta+\beta\ge \beta^*-\nu>0$. This proves that
$\dot x=(A(t)-\eta I)x$ admits a nonuniform exponential dichotomy
for all $\eta\in (\gamma-\sigma,\gamma+\sigma)$, and consequently
$(\gamma-\sigma,\gamma+\sigma)\subset \rho(A)$. This proves that
$\rho(A)$ is an open set.

For $\eta\in(\gamma-\sigma,\gamma+\sigma)$, the above proof shows
that systems $\dot x=(A(t)-\eta I)x$ and $\dot x=(A(t)-\gamma I)x$
both admit the nonuniform exponential dichotomy with the same
invariant projection $P(t)$. By Proposition \ref{po2.2} it holds
that $\mathcal U_\eta=\mathcal U_\gamma={\rm Im}P$ and $\mathcal
V_\eta=\mathcal V_\gamma={\rm Ker}P$.

For any given $\gamma^*\in J$, without loss of generality we
assume that $\gamma^*\le\gamma$. For each $\eta\in
[\gamma^*,\gamma]$ there exists an open set
$(\eta-\sigma_\eta,\eta+\sigma_\eta)\subset J$ with
$\sigma_\eta>0$ such that $\mathcal U_\zeta=\mathcal U_\eta$ and
$\mathcal V_\zeta=\mathcal V_\eta$ for
$\zeta\in(\eta-\sigma_\eta,\eta+\sigma_\eta)$. Since this kind of
intervals cover $[\gamma^*,\gamma]$, we get that  $\mathcal
U_\gamma=\mathcal U_{\gamma^*}$ and $\mathcal V_\gamma=\mathcal
V_{\gamma^*}$. By the arbitrariness of $\gamma^*\in J$ we can
finish the proof of the proposition.
\end{proof}

Let $\gamma_1,\gamma_2\in\rho(A)$. By Proposition \ref{po2.1}
$\mathcal U_{\gamma_2}$ and $\mathcal V_{\gamma_1}$ are both
linear integral manifolds. The following result characterizes
their intersection.

\begin{proposition}\label{po2.4}
For $\gamma_1,\gamma_2\in\rho(A)$ and $\gamma_1<\gamma_2$, set
$\mathcal W=\mathcal U_{\gamma_2}\cap\mathcal V_{\gamma_1}$. The
following conditions are equivalent.
\[
(a)\, \mathcal W\ne \R\times \{0\};\,\,\, (b)\,
[\gamma_1,\gamma_2]\cap\Sigma(A)\ne \emptyset;\,\,\, (c)\,
\dim\mathcal U_{\gamma_1}<\dim\mathcal U_{\gamma_2};\,\,\, (d)\,
\dim\mathcal V_{\gamma_1}>\dim\mathcal V_{\gamma_2}.
\]
\end{proposition}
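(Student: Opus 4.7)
The plan is to prove the four-way equivalence by exploiting the direct-sum decomposition $\R\times\R^n = \mathcal{U}_{\gamma_i}\oplus\mathcal{V}_{\gamma_i}$ from Proposition \ref{po2.2} together with the monotonicity $\mathcal{U}_{\gamma_1}\subseteq\mathcal{U}_{\gamma_2}$ and $\mathcal{V}_{\gamma_1}\supseteq\mathcal{V}_{\gamma_2}$ from Proposition \ref{po2.1}$(ii)$. The equivalence (c) $\Leftrightarrow$ (d) is immediate, since on each fiber $\dim\mathcal{U}_{\gamma_i}(\tau) + \dim\mathcal{V}_{\gamma_i}(\tau) = n$, so the two strict inequalities are manifestly equivalent.

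For (a) $\Leftrightarrow$ (c), I would work fiberwise. If $\xi\in\mathcal{U}_{\gamma_2}(\tau)\cap\mathcal{V}_{\gamma_1}(\tau)$ is nonzero, it cannot lie in $\mathcal{U}_{\gamma_1}(\tau)$, for otherwise $\xi\in\mathcal{U}_{\gamma_1}\cap\mathcal{V}_{\gamma_1} = \{0\}$; this forces $\mathcal{U}_{\gamma_1}\subsetneq\mathcal{U}_{\gamma_2}$, proving (a) $\Rightarrow$ (c). Conversely, given (c), take any $\xi\in\mathcal{U}_{\gamma_2}(\tau)\setminus\mathcal{U}_{\gamma_1}(\tau)$ and decompose $\xi = u + v$ with $u\in\mathcal{U}_{\gamma_1}(\tau)$ and $v\in\mathcal{V}_{\gamma_1}(\tau)$. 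Since $u\in\mathcal{U}_{\gamma_1}\subseteq\mathcal{U}_{\gamma_2}$, we have $v = \xi - u\in\mathcal{U}_{\gamma_2}(\tau)\cap\mathcal{V}_{\gamma_1}(\tau)$, and $v\ne 0$ (otherwise $\xi = u\in\mathcal{U}_{\gamma_1}$), so $(\tau,v)\in\mathcal{W}\setminus(\R\times\{0\})$.

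I would then establish (b) $\Leftrightarrow$ (c) via contrapositives. The direction $\neg$(b) $\Rightarrow$ $\neg$(c) is Proposition \ref{po2.3} applied to $J = [\gamma_1,\gamma_2]\subseteq\rho(A)$. For the converse, $\neg$(c) combined with $\mathcal{U}_{\gamma_1}\subseteq\mathcal{U}_{\gamma_2}$ forces $\mathcal{U}_{\gamma_1}=\mathcal{U}_{\gamma_2}$ and $\mathcal{V}_{\gamma_1}=\mathcal{V}_{\gamma_2}$, so by Proposition \ref{po2.2} the $\gamma_1$- and $\gamma_2$-dichotomies share a common invariant projection $P$. For each $\eta\in[\gamma_1,\gamma_2]$ I would construct a nonuniform exponential dichotomy for $\dot x = (A(t)-\eta I)x$ using this $P$: the bound on the $P$-side comes from writing $\Phi_\eta(t,s) = e^{(\gamma_1-\eta)(t-s)}\Phi_{\gamma_1}(t,s)$, yielding decay rate $\alpha^\ast = \alpha_{\gamma_1}+\gamma_1-\eta\le\alpha_{\gamma_1}<0$, while the bound on the $(I-P)$-side comes from $\Phi_\eta(t,s) = e^{(\gamma_2-\eta)(t-s)}\Phi_{\gamma_2}(t,s)$, yielding growth rate $\beta^\ast = \beta_{\gamma_2}+\gamma_2-\eta\ge\beta_{\gamma_2}>0$.

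The main obstacle is verifying the full compatibility package $\alpha^\ast+\mu<0$, $\beta^\ast-\nu>0$, and $\max\{\mu,\nu\}\le\min\{-\alpha^\ast,\beta^\ast\}$ with $\mu = \mu_{\gamma_1}$ and $\nu = \nu_{\gamma_2}$ inherited from two different endpoints. The inequalities $\alpha^\ast+\mu_{\gamma_1}<0$, $\beta^\ast-\nu_{\gamma_2}>0$, as well as the matching cross-conditions $\mu_{\gamma_1}\le -\alpha^\ast$ and $\nu_{\gamma_2}\le\beta^\ast$, follow directly from the endpoint dichotomy constraints since shifting by $\eta\in[\gamma_1,\gamma_2]$ only improves $-\alpha^\ast$ and $\beta^\ast$. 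The delicate mixed inequalities $\nu_{\gamma_2}\le -\alpha^\ast$ and $\mu_{\gamma_1}\le\beta^\ast$ require more care; I would handle them either by covering $[\gamma_1,\gamma_2]$ with finitely many sufficiently small subintervals and iterating the above construction using the openness of $\rho(A)$ from Proposition \ref{po2.3}, or by uniformly enlarging the nonuniform exponents across the interval so as to absorb the remaining slack while preserving all four inequalities.
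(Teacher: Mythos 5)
Your proof follows essentially the same path as the paper's: $(c)\Leftrightarrow(d)$ from the direct-sum decomposition of Proposition \ref{po2.2}; $(a)\Leftrightarrow(c)$ from monotonicity and the decomposition (you argue fiberwise via an explicit $\xi = u+v$, where the paper instead does the equivalent dimension count $\dim\mathcal W\ge\dim\mathcal U_{\gamma_2}+\dim\mathcal V_{\gamma_1}-n>0$); $\neg(b)\Rightarrow\neg(c)$ via Proposition \ref{po2.3}; and $\neg(c)\Rightarrow\neg(b)$ by splicing the two endpoint dichotomies through the shared projection, with $\alpha^*=\gamma_1-\eta+\alpha_1$, $\beta^*=\gamma_2-\eta+\beta_2$, $\mu=\mu_1$, $\nu=\nu_2$ --- exactly the paper's construction.

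The mixed inequalities $\nu_{\gamma_2}\le-\alpha^*$ and $\mu_{\gamma_1}\le\beta^*$ that you single out as delicate are not verified in the paper's proof either, so you are not missing anything the paper supplies --- but neither of your proposed repairs actually closes the gap. Covering $[\gamma_1,\gamma_2]$ by short subintervals via Proposition \ref{po2.3} is circular: you would need interior points to lie in $\rho(A)$ with the same projection, which is precisely the claim under proof, and the radii $\sigma$ from Proposition \ref{po2.3} depend on the local dichotomy constants and may shrink, so you cannot advance a uniform distance. Enlarging $\mu,\nu$ cannot help either: it only makes $\max\{\mu,\nu\}\le\min\{-\alpha^*,\beta^*\}$ harder to satisfy, and the permitted modifications to the rates ($\alpha^*$ up toward $0$, $\beta^*$ down toward $0$) only shrink $\min\{-\alpha^*,\beta^*\}$. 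The resolution the paper relies on implicitly (see the remark just after the definition of nonuniform exponential dichotomy) is that the nonuniform constants are uniformly small across both endpoints, i.e.\ $\max\{\mu_1,\nu_1,\mu_2,\nu_2\}\le\min\{-\alpha_1,\beta_1,-\alpha_2,\beta_2\}$; under this the two mixed inequalities follow at once from $-\alpha^*\ge-\alpha_1$ and $\beta^*\ge\beta_2$. Stating that hypothesis explicitly is the clean fix.
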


\begin{proof}
The equivalence between $(c)$ and $(d)$ follows easily from
Proposition \ref{po2.2}.

\noindent {\it The condition $(c)$ implies $(a)$}. Since $(\mathcal
U_{\gamma_2}\cup\mathcal V_{\gamma_1})\setminus(\mathcal
U_{\gamma_2}\cap\mathcal V_{\gamma_1})\subset\R\times\R^n$ and
$\mathcal U_{\gamma_1}<\mathcal U_{\gamma_2}$, we have
\[
\dim\mathcal W=\dim(\mathcal U_{\gamma_2}\cap\mathcal
V_{\gamma_1})\ge\dim\mathcal U_{\gamma_2}+\dim\mathcal
V_{\gamma_1}-n>\dim\mathcal U_{\gamma_1}+\dim\mathcal
V_{\gamma_1}-n=0.
\]
So $\mathcal W\ne \R\times\{0\}$. This proves $(a)$.

\noindent {\it The condition $(a)$ implies $(b)$}. By contradiction we have
$[\gamma_1,\gamma_2]\subset\rho(A)$. So, it follows from Propositions
\ref{po2.3} and \ref{po2.2} that
\[
\mathcal U_{\gamma_2}\cap\mathcal V_{\gamma_1}= \mathcal
U_{\gamma_1}\cap\mathcal V_{\gamma_1}=\R\times\{0\}.
\]
This is in contradiction with $(a)$, and consequently $(b)$
follows.

\noindent {\it The condition $(b)$ implies $(c)$}. If not, since $\mathcal
U_{\gamma_1}\subseteq\mathcal U_{\gamma_2}$ by Proposition
\ref{po2.2} we have $\dim\mathcal U_{\gamma_1}=\dim\mathcal
U_{\gamma_2}$. It follows that $\dim\mathcal
U_{\gamma_1}(\tau)=\dim\mathcal U_{\gamma_2}(\tau)$ for all
$\tau\in\R$. But $\mathcal U_{\gamma_1}(\tau)$ and $\mathcal
U_{\gamma_2}(\tau)$ are linear subspaces of $\R^n$, we must have
$\mathcal U_{\gamma_1}(\tau)=\mathcal U_{\gamma_2}(\tau)$, and
consequently $\mathcal U_{\gamma_1}=\mathcal U_{\gamma_2}$. By the
equivalence of $(c)$ and $(d)$ we also have $\mathcal
V_{\gamma_1}=\mathcal V_{\gamma_2}$. This implies via Proposition
\ref{po2.2} that the nonuniform exponential dichotomies of $\dot
x=(A(t)-\gamma_1 I)x$ and $\dot x=(A(t)-\gamma_2 I)x$ involve the
same invariant projection $P(t)$. So there exist $K_i\ge 1$,
$\alpha_i<0$, $\beta_i>0$ and $\mu_i,\nu_i\ge 0$ with
$\alpha_i+\mu_i<0$ and $\beta_i-\nu_i>0$ for $i=1,2$ such that
\begin{eqnarray*}
\|\Phi_{\gamma_i}(t,s)P(s)\|&\le & K_ie^{\alpha_i(t-s)+\mu_i|s|} \,\mbox{ for } t\ge s,\\
\|\Phi_{\gamma_i}(t,s)(I-P(s))\|&\le& K_ie^{\beta_i(t-s)+\nu_i|s|}
\,\mbox{ for } t\le s.
\end{eqnarray*}
For $\gamma\in[\gamma_1,\gamma_2]$, take
$\alpha=\gamma_1-\gamma+\alpha_1$,
$\beta=\gamma_2-\gamma+\beta_2$, $\mu=\mu_1$, $\nu=\nu_2$ and
$K=\max\{K_1,K_2\}$, we have
\begin{eqnarray*}
\|\Phi_{\gamma}(t,s)P(s)\|&=&e^{(\gamma_1-\gamma)(t-s)}\|\Phi_{\gamma_1}(t,s)P(s)\|\le
K e^{\alpha(t-s)+\mu|s|} \,\mbox{ for } t\ge s,\\
\|\Phi_{\gamma}(t,s)(I-P(s))\|&=&e^{(\gamma_2-\gamma)(t-s)}\|\Phi_{\gamma_2}(t,s)(I-P(s))\|
\le K e^{\beta(t-s)+\nu|s|} \,\mbox{ for } t\le s.
\end{eqnarray*}
This proves that $\gamma\in\rho(A)$ and consequently
$[\gamma_1,\gamma_2]\subset\rho(A)$, a contradiction with the
assumption $(b)$. Hence $(c)$ holds. We complete the proof of the
proposition. \end{proof}

\subsection{Proof of Theorem \ref{th1.1}}

\noindent $(a)$ By Proposition \ref{po2.3} $\Sigma(A)$ is closed.
We now prove that the number of spectral intervals is no more
than $n$.

Since $\Sigma(A)\subset \R$ is closed, it is either empty or
consists of $m$ closed intervals with vanishing intersection. By
contradiction we assume that $m>n$. Set
$\Sigma(A)=[a_1,b_1]\cup\ldots\cup [a_n,b_n]\cup\ldots\cup
[a_m,b_m]$ with $a_1\le b_1<a_2\le b_2<\ldots<a_n\le
b_n<\ldots<a_m\le b_m$. Remark that we have the possibility with either $a_1=-\infty$, or
$b_m=\infty$, or both of them.  If it is the case, for instance $a_1=-\infty$ we take $[a_1,b_1]$ as $(-\infty,b_1]$. Choose $\gamma_i\in(b_i,a_{i+1})$ for
$i=1,\ldots,n$, we have the linear integral manifolds $\mathcal
U_{\gamma_i}$ and $\mathcal V_{\gamma_i}$ for $i=1,\dots,n$.

From Proposition \ref{po2.4} we get that
\[
\dim\mathcal U_{\gamma_1}<\dim\mathcal
U_{\gamma_2}<\ldots<\dim\mathcal U_{\gamma_n}\le n.
\]
So we must have either $\dim\mathcal U_{\gamma_1}=0$ or
$\dim\mathcal U_{\gamma_n}=n$.

If $\dim\mathcal U_{\gamma_1}=0$, i.e. $\mathcal
U_{\gamma_1}=\R\times\{0\}$, it follows from Proposition
\ref{po2.2} that $\mathcal V_{\gamma_1}=\R\times\R^n$, and the
invariant projection $P(t)=0$. By the definition of the nonuniform
exponential dichotomy we can prove easily that $\dot
x=(A(t)-\gamma I)x$ for all $\gamma<\gamma_1$ admits a nonuniform
exponential dichotomy with the invariant projection $P(t)$. This
verifies that $(-\infty,\gamma_1]\subset \rho(A)$. We are in
contradiction with the choice of $\gamma_1$.

If $\dim\mathcal U_{\gamma_n}=n$, i.e. $\mathcal
U_{\gamma_n}=\R\times\R^n$, Proposition \ref{po2.2} shows that the
invariant projection is $P(t)=I$. Then working in a similar way
to the proof of the case $\dim\mathcal U_{\gamma_1}=0$, we can prove that
$(\gamma_n,\infty)\subset \rho(A)$, a contradiction with the
choice of $\gamma_n$. Hence we must have $m\le n$. This proves
statement $(a)$.

\noindent $(b)$ First we claim that $\dim\mathcal W_i\ge 1$ for
$i=1,\ldots,m$.

We now prove this claim. For $i=1$, if $a_1\ne -\infty$ then
$\gamma_0,\gamma_1\in\rho(A)$ and
$[\gamma_0,\gamma_1]\cap\Sigma(A)\ne\emptyset$. Proposition
\ref{po2.4} shows that $\mathcal U_{\gamma_1}\supsetneq\mathcal
U_{\gamma_0}$. Since $\mathcal U_{\gamma_0}\oplus\mathcal
V_{\gamma_0}=\R\times\R^n$, we must have $\mathcal W_1=\mathcal
U_{\gamma_1}\cap\mathcal V_{\gamma_0}\supsetneq \mathcal
U_{\gamma_0}\cap\mathcal V_{\gamma_0}$. This implies that
$\dim\mathcal W_1\ge 1$ because $\mathcal W_1$ is a linear
integral manifold.

If $a_1= -\infty$ then $\mathcal W_1=\mathcal U_{\gamma_1}$
because $\mathcal V_{\gamma_0}=\R\times\R^n$.  By contradiction we
assume that $\dim\mathcal W_1=0$, i.e. $\mathcal W_1=\R\times
\{0\}$. Then $P(t)=0$ is the invariant projection associated with the
nonuniform exponential dichotomy of $\dot x=(A(t)-\gamma_1 I)x$. From the proof of $(a)$ we get that
$(-\infty,\gamma_1]\subset\rho(A)$. It is in contradiction with
the choice of $\gamma_1$. So we have $\dim\mathcal W_1\ge 1$.

For $i>1$, we have $\gamma_{i-1},\gamma_i\in\rho(A)$ and
$[\gamma_{i-1},\gamma_i]\cap\Sigma(A)\ne\emptyset$. By Proposition
\ref{po2.4} we have $\mathcal U_{\gamma_i}\supsetneq\mathcal
U_{\gamma_{i-1}}$. It follows that $\mathcal W_i=\mathcal
U_{\gamma_i}\cap\mathcal V_{\gamma_{i-1}}\supsetneq \mathcal
U_{\gamma_{i-1}}\cap\mathcal V_{\gamma_{i-1}}$ and consequently
$\dim\mathcal W_i\ge 1$.  This proves the claim.

Next we claim that $\mathcal V_{\gamma_i}=\mathcal
W_{i+1}+\mathcal V_{\gamma_{i+1}}$ for $i=0,1,\ldots,m-1$. In fact,
it follows from the fact that $\mathcal V_{\gamma_i}=\mathcal V_{\gamma_i}\cap(\mathcal
U_{\gamma_{i+1}}+\mathcal V_{\gamma_{i+1}})=\mathcal
V_{\gamma_i}\cap\mathcal U_{\gamma_{i+1}}+\mathcal
V_{\gamma_{i+1}}=\mathcal W_{i+1}+\mathcal V_{\gamma_{i+1}}$,
where in the second equality we have used the fact that $\mathcal
V_{\gamma_i}\supset\mathcal V_{\gamma_{i+1}}$.

Applying the last claim we have
\begin{eqnarray*}
\R\times \R^n&=&\mathcal U_{\gamma_0}+\mathcal
V_{\gamma_0}=\mathcal W_{0}+\mathcal V_{\gamma_0} =\mathcal
W_{0}+\mathcal W_1+\mathcal V_{\gamma_1}=\ldots
\\
&=& \mathcal W_{0}+\mathcal W_1+\ldots+\mathcal W_{m}+\mathcal
V_{\gamma_m}= \mathcal W_{0}+\mathcal W_1+\ldots+\mathcal
W_{m}+\mathcal W_{m+1}.
\end{eqnarray*}
Finally for $0\le i<j\le m+1$ we have $\mathcal W_i\cap\mathcal
W_j\subset\mathcal U_{\gamma_i}\cap\mathcal
V_{\gamma_{j-1}}\subset\mathcal U_{\gamma_i}\cap\mathcal
V_{\gamma_{i}}= \R\times\{0\}$. This proves that
$\R\times\R^n=\mathcal W_0\oplus\mathcal W_1\oplus
\ldots\oplus\mathcal W_{m+1}$ and consequently statement $(b)$.

We complete the proof of the theorem.\qed

\subsection{Proof of Theorem \ref{th1.2}}

By the assumption the evolution operator
$\Phi(t,s)$ of  system \eqref{1.1} has a nonuniformly bounded
growth, i.e. there exist $K\ge 1$, $a\ge 0$ and $\e\ge 0$ such
that
\begin{equation}\label{2.0}
\|\Phi(t,s)\|\le
Ke^{a|t-s|+\e|s|}, \quad t,s\in\R.
\end{equation}
First we claim that $\Sigma(A)\subset[-a-2\e,a+2\e]$, and so it is bounded.

For $\gamma>a+2\e$, we get from \eqref{2.0} that
\[
\|\Phi_\gamma(t,s)\|\le Ke^{(-\gamma+a)(t-s)+\e|s|},\quad \mbox{
for } t\ge s.
\]
Since $-\gamma+a+\e<-\e\le 0$, system $\dot x=(A(t)-\gamma I)x$
admits a nonuniform exponential dichotomy with the invariant
projection $P(t)=I$. This shows that $\gamma\in\rho(A)$ and
consequently $(a+2\e,\infty)\subset\rho(A)$.

For $\gamma<-a-2\e$, we have
\[
\|\Phi_\gamma(t,s)\|\le Ke^{-(\gamma+a)(t-s)+\e|s|},\quad \mbox{
for } t\le s.
\]
Since $-\gamma-a-\e>\e\ge 0$, system $\dot x=(A(t)-\gamma I)x$
admits a nonuniform exponential dichotomy with the invariant
projection $P(t)=0$. Hence we have
$(-\infty,-a-2\e)\subset\rho(A)$. Consequently
$\Sigma(A)\subset [-a-2\e,a+2\e]$. The claim follows.

Next we prove that $\Sigma(A)\ne \emptyset$. The above proof
implies that for $\gamma>a+2\e$, $\mathcal U_\gamma={\rm
Im}P=\R\times\R^n$ and $\mathcal V_\gamma={\rm
Ker}P=\R\times\{0\}$ because $P(t)=I$, and that for
$\gamma<-a-2\e$, $\mathcal U_\gamma={\rm Im}P=\R\times\{0\}$ and
$\mathcal V_\gamma={\rm Ker}P=\R\times\R^n$ because $P(t)=0$. Set
\[
\gamma^*=\sup\{\gamma\in\rho(A);\,\mathcal
V_\gamma=\R\times\R^n\}.
\]
Then $\gamma^*\in[-a-2\e,a+2\e]$. Moreover we have
$\gamma^*\in\Sigma(A)$. Otherwise, by Proposition \ref{po2.3} there
exists a neighborhood $J$ of $\gamma^*$ such that $J\subset
\rho(A)$ and for any $\gamma\in J$ we have $\mathcal
V_\gamma=\mathcal V_{\gamma^*}$. This is in contradiction with the
definition of $\gamma^*$. So $\Sigma(A)\ne\emptyset$. This proves
statement $(a)$.

Let $\Sigma(A)=[a_1,b_1]\cup\ldots\cup
[a_m,b_m]\subset[-a-2\e,a+2\e]$ with $m\ge 1$ and $-\infty<a_1\le
b_1<\ldots<a_m\le b_m<\infty$.  Then statement $(b)$ follows from Theorem
\ref{th1.1}, i.e.
\[
\mathcal W_0\oplus\mathcal W_1\oplus\ldots\oplus\mathcal W_m\oplus\mathcal W_{m+1}=\R\times\R^n.
\]
We complete the proof of the theorem. \qed

\section{Proof of Theorem \ref{th1.3}}\label{s3}

\setcounter{section}{3}
\setcounter{equation}{0}\setcounter{theorem}{0}

For proving Theorem \ref{th1.3} we need some preliminary results, which will be
presented in the next subsection.

\subsection{Preparation to the proof of Theorem \ref{th1.3}}

\begin{lemma}\label{le3.1}
The following statements are equivalent.
\begin{itemize}
\item[$(a)$] Systems \eqref{1.1} and \eqref{1.4} are nonuniformly kinematically similar via a transformation $x=S(t)y$.

\item[$(b)$] $\Phi_A(t,s)S(s)=S(t)\Phi_B(t,s)$ for all $t,s\in\R$,
where $\Phi_A$ and $\Phi_B$ are the evolution operators of systems
\eqref{1.1} and \eqref{1.4}, respectively.

\item[$(c)$] $S(t)$ is a solution of $\dot S=A(t)S-SB(t)$.
\end{itemize}
\end{lemma}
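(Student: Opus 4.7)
\medskip

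\noindent\textbf{Proof plan for Lemma \ref{le3.1}.} The plan is to verify the three equivalences as a cycle (a)$\Rightarrow$(b)$\Rightarrow$(c)$\Rightarrow$(a); the nonuniform bound \eqref{enks} on $S(t)$ and $S(t)^{-1}$ is a standing hypothesis on the transformation that plays no role in the differential-equation identities, so the work reduces to three short calculations based on the defining property of an evolution operator.

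For (a)$\Rightarrow$(b), I would fix $s\in\R$ and $\xi\in\R^n$, set $y(t)=\Phi_B(t,s)\xi$ and $x(t)=S(t)y(t)$. Since $x=S(t)y$ transforms \eqref{1.1} into \eqref{1.4}, $x(t)$ is a solution of \eqref{1.1} with initial value $x(s)=S(s)\xi$, and therefore $x(t)=\Phi_A(t,s)S(s)\xi$. Comparing the two expressions of $x(t)$ and using the arbitrariness of $\xi$ yields $S(t)\Phi_B(t,s)=\Phi_A(t,s)S(s)$ for all $t,s$.

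For (b)$\Rightarrow$(c), I would differentiate the identity $\Phi_A(t,s)S(s)=S(t)\Phi_B(t,s)$ with respect to $t$, obtaining
\[
A(t)\Phi_A(t,s)S(s)=\dot S(t)\Phi_B(t,s)+S(t)B(t)\Phi_B(t,s).
\]
Evaluating at $t=s$ and using $\Phi_A(s,s)=\Phi_B(s,s)=I$ gives $A(s)S(s)=\dot S(s)+S(s)B(s)$, i.e.\ $\dot S=A(t)S-SB(t)$; here I would also note that the differentiability of $S$ is automatic from the identity in (b) once $A$ is sufficiently regular, or it is part of the standing assumption on a nonuniform Lyapunov matrix. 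For (c)$\Rightarrow$(a), given a solution $y(t)$ of \eqref{1.4}, set $x(t)=S(t)y(t)$; then
\[
\dot x=\dot S\,y+S\dot y=(A(t)S-SB(t))y+SB(t)y=A(t)S(t)y=A(t)x,
\]
so the change of variables $x=S(t)y$ transforms \eqref{1.4} into \eqref{1.1}; combined with the bounds \eqref{enks} on $S$ and $S^{-1}$ which are part of the hypothesis on $S$ being a nonuniform Lyapunov matrix, this is exactly the content of (a).

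There is no genuine obstacle here; the statement is a bookkeeping lemma of the same type as the classical Lyapunov–similarity equivalence for autonomous systems, and the only thing one needs to be careful about is (i) that the identity $\Phi_A(t,s)S(s)=S(t)\Phi_B(t,s)$ is read in both directions (``from $y$ to $x$'' and ``from $x$ to $y$'') in the same way, and (ii) that in step (b)$\Rightarrow$(c) the regularity needed to differentiate matches the standing assumption that $S$ is differentiable, so no additional hypothesis is introduced.
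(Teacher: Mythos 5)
Your proof is correct and is exactly the standard direct verification; the paper does not write it out but merely cites Lemma~2.1 of \cite{Si02JLMS}, \cite{Co78} and \cite{Sie2014}. The cycle $(a)\Rightarrow(b)\Rightarrow(c)\Rightarrow(a)$ is the same argument those references contain, and you handle the one potentially slippery point correctly: the growth bounds \eqref{enks} on $S$ and $S^{-1}$ (the ``nonuniform Lyapunov'' part of the definition) are a standing hypothesis on $S$ that is carried along unchanged, and the content of the equivalence is purely the ODE/evolution-operator identity, so $(c)\Rightarrow(a)$ legitimately invokes those bounds from the hypothesis rather than deriving them. One very minor remark on $(b)\Rightarrow(c)$: you can dispense with the aside about regularity of $S$ being ``automatic,'' since the definition of nonuniform kinematic similarity already requires $S$ to be a differentiable matrix function, so differentiability is simply part of the standing assumptions and need not be re-derived from $(b)$.
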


\begin{proof}
See Lemma 2.1 of \cite{Si02JLMS}, \cite{Co78} and \cite{Sie2014}.
\end{proof}

\begin{lemma}\label{le3.2}
 If systems \eqref{1.1} and \eqref{1.4} are nonuniformly kinematically similar, then they have the same nonuniform dichotomy spectrum.
\end{lemma}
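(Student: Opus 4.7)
The plan is to show $\rho(A)=\rho(B)$, which is equivalent to $\Sigma(A)=\Sigma(B)$, by proving that nonuniform kinematic similarity transfers the property of admitting a nonuniform exponential dichotomy from one system to the other. Since the similarity relation is symmetric (if $x=S(t)y$ works from $A$ to $B$, then $y=S(t)^{-1}x$ works from $B$ to $A$, and $S^{-1}$ satisfies the same Lyapunov bound \eqref{enks}), it is enough to prove one direction: if the shifted system for $B$ admits a nonuniform exponential dichotomy, then so does the shifted system for $A$.

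First I would observe that the change $x=S(t)y$ sends $\dot x=(A(t)-\gamma I)x$ to $\dot y=(B(t)-\gamma I)y$ for every $\gamma\in\mathbb R$: by Lemma \ref{le3.1}(c) we have $\dot S=A(t)S-SB(t)=(A(t)-\gamma I)S-S(B(t)-\gamma I)$, so the shifted $S$ satisfies the corresponding similarity identity. Equivalently, the shifted evolution operators $\Phi_A^\gamma(t,s)=e^{-\gamma(t-s)}\Phi_A(t,s)$ and $\Phi_B^\gamma(t,s)=e^{-\gamma(t-s)}\Phi_B(t,s)$ satisfy $\Phi_A^\gamma(t,s)S(s)=S(t)\Phi_B^\gamma(t,s)$.

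Next, assume $\dot y=(B(t)-\gamma I)y$ admits a nonuniform exponential dichotomy with invariant projection $Q(t)$ and constants $\tilde K,\tilde\alpha,\tilde\beta,\tilde\mu,\tilde\nu$. I would define $P(t):=S(t)Q(t)S(t)^{-1}$, verify $P(t)^2=P(t)$ directly from $Q^2=Q$, and verify invariance $P(t)\Phi_A^\gamma(t,s)=\Phi_A^\gamma(t,s)P(s)$ by combining the intertwining identity above with $Q(t)\Phi_B^\gamma(t,s)=\Phi_B^\gamma(t,s)Q(s)$. The dichotomy estimates transfer through the factorization
\[
\Phi_A^\gamma(t,s)P(s)=S(t)\,\Phi_B^\gamma(t,s)Q(s)\,S(s)^{-1},
\]
together with $\|S(t)\|\le M_\varepsilon e^{\varepsilon|t|}$ and $\|S(s)^{-1}\|\le M_\varepsilon e^{\varepsilon|s|}$. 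For $t\ge s$, absorbing the unwanted $e^{\varepsilon|t|}$ via $|t|\le(t-s)+|s|$ gives
\[
\|\Phi_A^\gamma(t,s)P(s)\|\le M_\varepsilon^2\tilde K\,e^{(\tilde\alpha+\varepsilon)(t-s)+(\tilde\mu+2\varepsilon)|s|},
\]
and for $t\le s$, using $|t|\le-(t-s)+|s|$ in the analogous estimate for $\Phi_A^\gamma(t,s)(I-P(s))$, one gets
\[
\|\Phi_A^\gamma(t,s)(I-P(s))\|\le M_\varepsilon^2\tilde K\,e^{(\tilde\beta-\varepsilon)(t-s)+(\tilde\nu+2\varepsilon)|s|}.
\]
Because $\tilde\alpha+\tilde\mu<0$ and $\tilde\beta-\tilde\nu>0$ are strict, I can choose $\varepsilon>0$ so small that the perturbed exponents $(\tilde\alpha+\varepsilon,\tilde\beta-\varepsilon,\tilde\mu+2\varepsilon,\tilde\nu+2\varepsilon)$ still satisfy all the conditions in the definition of a nonuniform exponential dichotomy. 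This proves $\gamma\in\rho(B)\Rightarrow\gamma\in\rho(A)$; the reverse inclusion is the same argument applied to $S^{-1}$, and therefore $\Sigma(A)=\Sigma(B)$.

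The only delicate point I expect is the bookkeeping at the end of the previous paragraph: one must check that \emph{all} three constraints in the dichotomy definition, not just the two strict ones $\alpha+\mu<0$ and $\beta-\nu>0$ but also $\max\{\mu,\nu\}\le\min\{-\alpha,\beta\}$, remain valid after the additive $\varepsilon$-perturbation of the exponents. This is why \eqref{enks} is stated with $M_\varepsilon$ depending on an arbitrarily small $\varepsilon>0$: one is free to shrink $\varepsilon$ after the constants $\tilde\alpha,\tilde\beta,\tilde\mu,\tilde\nu$ are fixed, paying only a larger multiplicative constant in the final dichotomy bound, which is harmless.
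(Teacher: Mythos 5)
Your approach is essentially the one the paper outlines (and that the cited Lemma~3.6 of \cite{Sie2014} carries out in detail): conjugate the projection by $S$, use the intertwining relation of Lemma~\ref{le3.1}$(b)$ for the $\gamma$-shifted systems, factor $\Phi_A^\gamma P = S\,\Phi_B^\gamma Q\,S^{-1}$, absorb the $e^{\varepsilon|t|}$ and $e^{\varepsilon|s|}$ factors via $|t|\le|t-s|+|s|$, and then make $\varepsilon$ small. The computations are correct, and the observation that $S^{-1}$ is itself a nonuniform Lyapunov matrix (giving the reverse inclusion) is exactly the symmetry the paper relies on.

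The one place where your argument is incomplete is precisely the ``delicate point'' you flag but then wave away. After perturbation the new constants are $(\tilde\alpha+\varepsilon,\tilde\beta-\varepsilon,\tilde\mu+2\varepsilon,\tilde\nu+2\varepsilon)$, so the constraint $\max\{\mu,\nu\}\le\min\{-\alpha,\beta\}$ becomes $\max\{\tilde\mu,\tilde\nu\}+3\varepsilon\le\min\{-\tilde\alpha,\tilde\beta\}$. Since this constraint is stated in the paper as a \emph{non-strict} inequality, it is not guaranteed to have any slack: if the system for $B$ happens to admit only constants with $\max\{\tilde\mu,\tilde\nu\}=\min\{-\tilde\alpha,\tilde\beta\}$ exactly, then no $\varepsilon>0$ works, and ``shrink $\varepsilon$'' does not resolve the issue. (By contrast, the two strict constraints $\alpha+\mu<0$ and $\beta-\nu>0$ really do survive small perturbations, as you use.) To close this gap one has to either assume a strict version of the third constraint (as is implicit in the surrounding literature, where $\mu,\nu$ are taken ``sufficiently small'' relative to $-\alpha,\beta$), or argue separately that dichotomy constants satisfying the strict version can always be chosen; neither step is automatic from $e^{\varepsilon|t|}$-tempered bounds on $S$. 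The paper itself does not carry out this bookkeeping either (its proof is one line deferring to a reference with a slightly different definition), so the gap is more a feature of the definition than of your argument, but your final paragraph asserts the point is handled when it is not.
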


\begin{proof}
It follows from statement $(b)$ of Lemma \ref{le3.1} and the fact
that $S(t)$ is a nonuniformly Lyapunov matrix. For more details, see e.g. Lemma 3.6 of \cite{Sie2014}.
\end{proof}

\begin{lemma}\label{le3.3}
Let $P_0\in\R^{n\times n}$ be a symmetric projection and
$\X(t)\in{\rm GL}_n(\R)$ the group of invertible matrix functions
in $t\in \R$. Set
$Q(t)=P_0X(t)^TX(t)P_0+(I-P_0)X(t)^TX(t)(I-P_0)$. Then
\begin{itemize}
\item[$(a)$] $Q(t)$ is positively definite and symmetric.

\item[$(b)$] There exists a unique positively definite and
symmetric matrix function $R(t)$ such that $R(t)^2=Q(t)$ and
$P_0R(t)=R(t)P_0$.

\item[$(c)$] $S(t)=X(t)R(t)^{-1}$ is invertible and satisfies
$S(t)P_0S(t)^{-1}=X(t)P_0X(t)^{-1}$ and
\[
\|S(t)\|\le \sqrt{2},\quad
\|S(t)^{-1}\|\le\sqrt{\|X(t)P_0X(t)^{-1}\|^2+\|X(t)(I-P_0)X(t)^{-1}\|^2}
\]
\end{itemize}
\end{lemma}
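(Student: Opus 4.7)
The plan is to prove parts (a), (b), (c) in sequence, the first and third by direct computation and the second by the spectral theorem, with the only subtle point being that the square root $R(t)$ inherits commutation with $P_0$ from $Q(t)$.

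For part (a), since $P_0^T=P_0$, both summands defining $Q(t)$ are Gram matrices of the form $M^T M$, so $Q(t)$ is symmetric and positive semidefinite. Positive definiteness I would extract from the identity $v^T Q(t) v = \|X(t)P_0 v\|^2 + \|X(t)(I-P_0)v\|^2$: if this vanishes then the invertibility of $X(t)$ forces $P_0 v = (I-P_0)v = 0$, hence $v=0$.

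For part (b), the existence and uniqueness of a positive definite symmetric square root $R(t)$ of the positive definite symmetric matrix $Q(t)$ is a standard consequence of the spectral theorem, so the only genuine work is the commutation relation $P_0 R(t)=R(t)P_0$. First I would verify, by expanding and using $P_0(I-P_0)=0$, that $P_0 Q(t)=Q(t)P_0 = P_0 X(t)^T X(t) P_0$. Then I would write $R(t)=p(Q(t))$ for a real polynomial $p$ that interpolates the square-root function on the (finite, positive) spectrum of $Q(t)$; the commutation $P_0 Q(t)=Q(t)P_0$ then transfers immediately to $P_0 p(Q(t))=p(Q(t))P_0$, which is exactly $P_0 R(t)=R(t)P_0$.

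For part (c), invertibility of $S(t)=X(t)R(t)^{-1}$ is clear. The commutation from (b) gives $R(t)^{-1}P_0 = P_0 R(t)^{-1}$, so $S(t)P_0 S(t)^{-1}=X(t)R(t)^{-1}P_0 R(t)X(t)^{-1} = X(t)P_0 R(t)^{-1}R(t) X(t)^{-1} = X(t) P_0 X(t)^{-1}$. For the norm bound $\|S(t)\|\le\sqrt{2}$, I would substitute $u=R(t)^{-1}v$ so that $\|S(t)v\|^2=\|X(t)u\|^2$ while $\|v\|^2 = u^T R(t)^2 u = u^T Q(t) u = \|X(t)P_0 u\|^2 + \|X(t)(I-P_0)u\|^2$, and then apply $\|a+b\|^2\le 2(\|a\|^2+\|b\|^2)$ to $a=X(t)P_0 u$, $b=X(t)(I-P_0)u$. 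For $\|S(t)^{-1}\|$, I would compute directly $\|S(t)^{-1}v\|^2 = v^T X(t)^{-T}Q(t) X(t)^{-1} v = \|X(t)P_0 X(t)^{-1} v\|^2 + \|X(t)(I-P_0)X(t)^{-1} v\|^2$ and bound each summand by the corresponding operator norm times $\|v\|^2$.

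The main technical hurdle is the commutation identity $P_0 R(t)=R(t)P_0$ in (b); once it is in place, the conjugation identity and both norm estimates in (c) follow from routine linear algebra, with the bound $\sqrt{2}$ essentially reflecting a two-term parallelogram inequality.
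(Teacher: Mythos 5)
Your proof is correct and complete. The paper itself does not prove this lemma but merely cites Lemma A.5 of Siegmund's reducibility paper and Lemma 3.2 of Chu--Liao--Siegmund--Xia--Zhang; the argument in those references is the same standard one you give: the Gram-matrix observation for (a), the spectral-theorem square root together with the polynomial representation $R=p(Q)$ to transfer the commutation with $P_0$ from $Q$ to $R$ in (b), and the substitution $u=R^{-1}v$ plus the parallelogram-type inequality $\|a+b\|^2\le 2(\|a\|^2+\|b\|^2)$ for the norm bounds in (c). All computations check out, including the key identity $v^T Q v=\|XP_0v\|^2+\|X(I-P_0)v\|^2$ and the expansion $\|S^{-1}v\|^2=\|XP_0X^{-1}v\|^2+\|X(I-P_0)X^{-1}v\|^2$.
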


\begin{proof} See Lemma A.5 of \cite{Si02JLMS} and Lemma 3.2 of \cite{Sie2014}.
\end{proof}

\begin{lemma}\label{le3.4}
Assume that system \eqref{1.1} has an invariant projection
$P:\,\R\rightarrow\R^{n\times n}$ with $P(t)\ne 0,I$. Then there
exists a differentiable nonuniform Lyapunov matrix function $S: \, \R\rightarrow{\rm
GL}_n(\R)$ such that
\[
S(t)^{-1}P(t)S(t)=\left(\begin{array}{cc} I & 0\\ 0 &
0\end{array}\right) \quad \mbox{ for all } t\in \R.
\]
\end{lemma}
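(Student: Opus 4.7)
\textbf{Proof proposal for Lemma \ref{le3.4}.} The plan is to promote a constant linear diagonalization of $P(0)$ to a time-dependent one using the evolution operator $\Phi(t,0)$, and then to polish the resulting transformation via the symmetric-square-root construction of Lemma \ref{le3.3} in order to bring $\|S(t)\|$ under uniform control while keeping the correct conjugation of $P(t)$.

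First I would fix a canonical symmetric projection $P_0=\left(\begin{smallmatrix}I & 0\\ 0 & 0\end{smallmatrix}\right)$ of rank equal to ${\rm rank}\,P(0)$, which lies strictly between $0$ and $n$ by hypothesis, and choose a constant $M\in{\rm GL}_n(\R)$ with $M^{-1}P(0)M=P_0$. Setting $X(t)=\Phi(t,0)M$, the invariance identity $P(t)\Phi(t,0)=\Phi(t,0)P(0)$ gives
\[
X(t)P_0 X(t)^{-1}=\Phi(t,0)MP_0M^{-1}\Phi(0,t)=\Phi(t,0)P(0)\Phi(0,t)=P(t),
\]
so $X(t)$ is a (generally unbounded) time-dependent conjugator. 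Applying Lemma \ref{le3.3} to $X(t)$ with the symmetric projection $P_0$, I obtain the positive definite symmetric matrix function $Q(t)=P_0 X(t)^T X(t)P_0+(I-P_0)X(t)^T X(t)(I-P_0)$, its unique positive definite square root $R(t)$ commuting with $P_0$, and the matrix $S(t):=X(t)R(t)^{-1}\in{\rm GL}_n(\R)$. By part $(c)$ of Lemma \ref{le3.3},
\[
S(t)P_0 S(t)^{-1}=X(t)P_0 X(t)^{-1}=P(t),
\]
which rearranges to the required identity $S(t)^{-1}P(t)S(t)=P_0$.

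Differentiability of $S(t)$ follows from the differentiability of $A(t)$ and $\Phi(t,0)$: $X(t)$ is differentiable, hence so is $Q(t)$; because $Q(t)$ stays positive definite, the principal square root $R(t)$ is a smooth matrix function of $Q(t)$ (obtained either by the Riesz functional calculus on a contour enclosing the positive spectrum of $Q(t)$, or by differentiating the Lyapunov equation $R(t)^2=Q(t)$), and $R(t)^{-1}$ is therefore differentiable as well. For the nonuniform Lyapunov estimates, Lemma \ref{le3.3}$(c)$ yields the uniform bound $\|S(t)\|\le \sqrt 2$, and
\[
\|S(t)^{-1}\|\le \sqrt{\|X(t)P_0X(t)^{-1}\|^2+\|X(t)(I-P_0)X(t)^{-1}\|^2}=\sqrt{\|P(t)\|^2+\|I-P(t)\|^2}.
\]
The hypothesis that $P$ arises as the invariant projection of a nonuniform exponential dichotomy, when specialized to $t=s$ in the defining estimates, gives $\|P(t)\|\le K e^{\mu|t|}$ and $\|I-P(t)\|\le K e^{\nu|t|}$; hence $\|S(t)^{-1}\|\le K\sqrt{2}\,e^{\max\{\mu,\nu\}|t|}$, and $S(t)$ satisfies the nonuniform Lyapunov condition \eqref{enks} with $\varepsilon=\max\{\mu,\nu\}$ (which may be taken arbitrarily small under the dichotomy framework).

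The main obstacle I anticipate is the last point, namely securing the nonuniform Lyapunov bound on $\|S(t)^{-1}\|$: the construction gives $\|S(t)\|\le \sqrt 2$ for free, but $\|S(t)^{-1}\|$ is controlled only through $\|P(t)\|$ and $\|I-P(t)\|$, whose exponential growth in $|t|$ is a feature (not a bug) of the nonuniform setting. This is exactly the trade-off that distinguishes the nonuniform case from the uniform one treated in Siegmund's \cite{Si02JLMS}, and it is the reason for insisting that $P$ come from a nonuniform exponential dichotomy with the specified exponents $\mu,\nu$.
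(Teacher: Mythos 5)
Your proposal is correct and follows essentially the same route as the paper: it conjugates $P$ at a fixed base point (you use $s=0$ with a constant matrix $M$, the paper uses a general $s$ with $T(s)$), promotes this to a time-dependent conjugator $X(t)=\Phi(t,0)M$ via invariance, and then polishes $X(t)$ into the Lyapunov matrix $S(t)=X(t)R(t)^{-1}$ using Lemma~\ref{le3.3}. The only place you add material is in spelling out the growth bound on $\|S(t)^{-1}\|$ through $\|P(t)\|$ and $\|I-P(t)\|$, which the paper leaves to a reference; this is the right justification and matches the intent of the citation.
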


\begin{proof}
Since $P(t)$ is an invariant projection associated with the
evolution operator $\Phi(t,s)$ of system \eqref{1.1}, i.e.
$P(t)\Phi(t,s)=\Phi(t,s)P(s)$ for $t,s\in \R$, it forces that
$P(t)$ and $P(s)$ for any $t,s\in\R$ are similar and so have the
same rank. The fact that $P(t)$ is a projection implies that for
any given $s\in \R$ there exists a $T(s)\in{GL}_n(\R)$ such that
\begin{equation}\label{3.00}
T(s)P(s)T(s)^{-1}=\left(\begin{array}{cc}I_{n_1\times n_1} &
0_{n_1\times
n_2}\\
0_{n_2\times n_1} & 0_{n_2\times n_2}\end{array}\right)=P_0 \quad
\mbox{ for all }s\in \R,
\end{equation} where $n_1=\dim{\rm Im}P$
and $n_2=\dim{\rm Ker}P$. Applying Lemma \ref{le3.3} to
$X(t)=\Phi(t,s)T(s)^{-1}$ and $P_0$ we get a $R(t)$ satisfying
$P_0R(t)=R(t)P_0$ for $t\in \R$. Set
$S(t)=\Phi(t,s)T(s)^{-1}R(t)^{-1}$, we have
\[
S(t)^{-1}P(t)S(t)=R(t)T(s)P(s)T(s)^{-1}R(t)^{-1}=R(t)P_0R(t)^{-1}=P_0,
\]
where we have used the fact that $\Phi(t,s)\Phi(s,t)=I$ and the
invariance of $P(t)$ with respect to $\Phi(t,s)$.

Finally, the fact that $S(t)$ is a nonuniform Lyapunov matrix function follows
from the expression of $P_0$ and statement $(c)$ of Lemma \ref{le3.3}. For more details, see the proof of Theorem 3.8 of \cite{Sie2014}.
We complete the proof of the lemma.
\end{proof}

\subsection{Proof of Theorem \ref{th1.3}}

By the assumptions and Theorem \ref{th1.2}, we have $m\ge 1$,
$a_1>-\infty$, $a_m<\infty$ and $\mathcal W_0\oplus\mathcal W_1\oplus\ldots\oplus\mathcal W_m\oplus \mathcal W_{m+1}=\R\times \R^n$. Moreover it follows from
Theorem \ref{th1.1} that $\dim\mathcal W_i\ge 1$ for
$i=1,\ldots,m$.

In what follows we call the open intervals $(b_0,a_1),(b_1,a_2),\ldots, (b_{m-1},a_m)$ and
$(b_m,a_{m+1})$ the {\it spectral gaps}, where
$b_0=-\infty$ and $a_{m+1}=\infty$. Choose
$\gamma_i\in(b_i,a_{i+1})$ for $i=0,1,\ldots,m$. By Theorem \ref{th1.1} we have $\mathcal W_0=\mathcal U_{\gamma_0}$ and $\mathcal W_{m+1}=\mathcal V_{\gamma_m}$. The following proof combines the origin version of this paper and that of Theorem 3.9 of \cite{Sie2014}.

For any given $\gamma_0\in(-\infty,a_1)$, since $(-\infty, \gamma_0]\subset \rho(A)$, the system
\[
\dot x=\left(A(t)-\gamma_0 I\right) x,
\]
admits a nonuniform exponential dichotomy with an invariant projection $\widetilde P_0$. Then we have
\begin{equation}\label{3.1}
\begin{array}{l}
\qquad\,\,\,\|\Phi(t,s)\widetilde P_0(s)\|\le K_1e^{(\gamma_0+\alpha_1)(t-s)+\mu_1|s|}
\quad \mbox{
for } t\ge s,\\
\|\Phi(t,s)(I-\widetilde P_0(s))\|\le K_1e^{(\gamma_0+\beta_1)(t-s)+\nu_1|s|} \quad
\mbox{ for } t\le s,
\end{array}
\end{equation}
where $K_1\ge 1$, $\alpha_1<0$, $\beta_1>0$, $\mu_1,\nu_1\ge 0$,
and $\alpha_1+\mu_1<0$ and $\beta_1-\nu_1>0$.

We claim that system \eqref{1.1} is nonuniformly kinematically similar to
\begin{equation}\label{3.2}
\dot y=\left(\begin{array}{cc} B_0(t) & 0\\
0 & B_{11}(t)\end{array} \right)y
\end{equation}
with $B_0:\,\R\rightarrow\R^{n_0\times n_0}$ and $B_{11}:\,\R\rightarrow\R^{m_1\times m_1}$ differentiable, where $n_0=\dim{\rm Im}P_0$ and $m_1=\dim{\rm Ker}P_0$. Moreover $\Sigma(B_0)=\emptyset$ and $\Sigma(B_{11})=\Sigma(A)$.

Now we prove this claim. By Lemma \ref{le3.4} there exists a
differentiable nonuniform Lyapunov matrix function $S_0:\,\R\rightarrow{\rm GL}_n(\R)$
such that
\[
S_0(t)^{-1}\widetilde P_0(t)S_0(t)=\left(\begin{array}{cc}I_{n_1\times
n_1} & 0_{n_1\times n_2}\\
0_{n_2\times n_1} & 0_{n_2\times n_2}\end{array}\right)= P_0.
\]
Define
\[
B(t)=S_0(t)^{-1}(A(t)S_0(t)-\dot S_0(t))\quad \mbox {for } t\in\R.
\]
Lemma \ref{le3.1} means that system \eqref{1.1} is nonuniformly kinematically similar to
\begin{equation}\label{3.3}
\dot y=B(t)y,
\end{equation}
via the transformation $x(t)=S_0(t)y(t)$, and that
$S_0(t)^{-1}\Phi(t,s)S_0(s)$ is a fundamental matrix solution of
\eqref{3.3}.

Set $R(t)=S_0(t)^{-1}\Phi(t,s)T^{-1}$. From the proof of Lemma
\ref{le3.4} we have $P_0R(t)=R(t)P_0$. This implies that
$R(t)^{-1}$ and $\dot R(t)$ both commute with $P_0$. Using the
fact that $S_0(t)^{-1}\Phi(t,s)S_0(s)$ is a fundamental matrix
solution of \eqref{3.3}, we can prove easily that
\begin{equation}\label{3.4}
B(t)=\dot R(t)R(t)^{-1}\quad \mbox{and} \quad P_0B(t)=B(t)P_0.
\end{equation}
Write $B(t)$ in the block form, i.e.
\[
B(t)=\left(\begin{array}{cc}B_0 & C_0\\
C_{11} & B_{11}\end{array}\right),
\]
where $B_0:\R\rightarrow\R^{n_0\times n_0}$,
$B_{11}:\R\rightarrow\R^{m_1\times m_1}$,
$C_0:\R\rightarrow\R^{n_0\times m_1}$ and
$C_{11}:\R\rightarrow\R^{m_1\times n_0}$. From the expression of
$P_0$ and the second equation of \eqref{3.4} we get that
$C_0(t)=0$ and $C_{11}(t)=0$.

By Lemma \ref{le3.2} systems \eqref{1.1} and \eqref{3.2} have the
same nonuniform dichotomy spectrum. Moreover the evolution
operator of system \eqref{3.2} has the invariant projection $P_0$
given in \eqref{3.00}. So we get from \eqref{3.00} and \eqref{3.1} that $\Sigma(B_0)\subset (-\infty, a_1)$. This implies that $\Sigma(B_0)=\emptyset$.
The claim follows.

 For $\lambda\in (b_1,a_2)$, system $\dot
y=(B(t)-\lambda I)y$ admits a nonuniform exponential dichotomy with an invariant projection $\widetilde P_1$. So
there exist $\widetilde K_1,\widetilde \alpha<0,\widetilde
\beta>0,\widetilde \mu,\widetilde \nu\ge 0$ with $\widetilde
\alpha+\widetilde\mu<0$ and $\widetilde\beta-\widetilde\nu>0$ such
that
\begin{equation}\label{3.6}
\begin{array}{l}
\|\Phi_1(t,s)\widetilde P_1\|\le \widetilde
K_1e^{(\gamma+\widetilde\alpha_1)(t-s)+\widetilde\mu_1|s|} \quad
\mbox{
for } t\ge s,\\
\|\Phi_1(t,s)(I-\widetilde P_1\|\le \widetilde
K_1e^{(\gamma+\widetilde\beta_1)(t-s)+\widetilde\nu_1|s|} \quad
\mbox{ for } t\le s,
\end{array}
\end{equation}
where $\Phi_1(t,s)$ is the evolution operator of system $\dot y=B(t)y$.

Since $\Sigma(B_{11})=\Sigma(A)$, it follows from the last claim that system
\[
\dot y_1=B_{11}(t)y_1,
\]
is nonuniformly kinematically similar to
\[
\dot z_1=\left(\begin{array}{cc}B_{1}(t) & 0\\
0 & B_{22}(t)\end{array}\right)z_1,
\]
via a  nonuniformly Lyapunov transformation $y_1=S_{11}(t)z_1$. Take
\[
S_1(t)=\left(\begin{array}{cc}I_{n_1\times n_1} & 0\\
0 & S_{11}(t)\end{array}\right)S_0(t).
\]
Then system \eqref{1.1} is nonuniformly kinematically similar to
\begin{equation}\label{3.7}
\dot z=\widetilde B_1(t) z,\qquad \widetilde B_1(t)=\left(\begin{array}{ccc}B_{0}(t) & 0 & 0\\
0 & B_{1}(t)  & 0\\
0& 0& B_{22}(t)
\end{array}\right),
\end{equation}
via the nonuniformly Lyapunov transformation $x(t)=S_1(t)z(t)$. Since the
first inequality of \eqref{3.6} also holds for all $\gamma\ge a_2$, taking into account equation \eqref{3.7} we get that $\Sigma(\mbox{diag}(B_0,B_{1}))\subset (-\infty,a_2)$. Similarly from
the second inequality of \eqref{3.6} we have
$\Sigma(B_{22})\subset(b_1,\infty)$. Hence we have $\Sigma(B_{1})=[a_1,b_1]$.

According to the above process, we get a nonuniform Lyapunov transformation $x(t)=\widetilde S(t)w(t)$, which send system \eqref{1.1} to
\begin{equation}\label{3.8}
\dot w=\widetilde B_{m-1}(t) w, \qquad \widetilde B_{m-1}(t)=\left(\begin{array}{ccccc}
B_0(t) &          &        &             & \\
       & B_{1}(t)   &        &            &   \\
       &          & \ddots &             & \\
       &          &        &   B_{m-1}(t)  &   \\
          &          &        &    &   B_{mm}(t)
  \end{array}
 \right),
\end{equation}
with $\Sigma(B_0)=\emptyset$, and $\Sigma(B_{i})=[a_i,b_i]$ for $i=1,\ldots,m-1$. Take $\gamma_m\in( b_m,\infty)$, system $\dot w=(\widetilde B_{m-1}(t)-\gamma_m I)w$ admits a nonuniform exponential dichotomy with an invariant projection $\widetilde P_{m}$. Using the same arguments as in the above proof, system \eqref{1.1} is nonuniformly kinematically similar to system \eqref{1.5}. Again as in the above proof we get that $\Sigma(\mbox{diag}(B_0,\ldots,B_m))\subset (-\infty, \gamma_m)$ and $\Sigma(B_{m+1})\subset (\gamma_m,\infty)$. This implies that $\Sigma(B_m)=[a_m,b_m]$ and $\Sigma(B_{m+1})=\emptyset$.

Finally, we prove that the order $n_i$ of the matrix $B_i(t)$ in \eqref{1.5} is equal to $\dim \mathcal W_i$. Since  $\mathcal W_0=\mathcal U_{\gamma_0}$ for $\gamma_0\in (-\infty,b_1)$, by Proposition \ref{po2.2}  it follows that $\dim \mbox{Im} \widetilde P_0=\dim \mathcal U_{\gamma_0}=\dim \mathcal W_0$. In addition, the order $n_0$ of $B_0(t)$ is equal to $\dim\mbox{Im}\widetilde P_0$. These verify that $n_0=\dim \mathcal W_0$. Note that $\gamma_0\in (-\infty,a_1)$ and $\gamma_1\in (b_1,a_2)$, we get from Propositions \ref{po2.2} and \ref{po2.4} that
\[
\dim \mbox{Im} \widetilde P_1=\dim \mathcal U_{\gamma_1}=\dim(\mathcal U_{\gamma_1}\cap(\mathcal U_{\gamma_0}\oplus\mathcal V_{\gamma_0}))=\dim\mathcal W_0+\dim\mathcal W_1,
\]
where we have used the facts that $\mathcal U_{\gamma_0}\subset \mathcal U_{\gamma_1}$ and $\mathcal W_1=\mathcal U_{\gamma_1}\cap\mathcal V_{\gamma_0}$.
This implies that $n_1=\dim\mathcal W_1$ because $n_0+n_1=\dim\mbox{Im}\widetilde P_1$. Similarly $n_2=\dim\mathcal W_2$ follows from the facts that
\[
n_0+n_1+n_2=\dim \mbox{Im} \widetilde P_2=\dim \mathcal U_{\gamma_2}=\dim(\mathcal U_{\gamma_2}\cap(\mathcal U_{\gamma_1}\oplus\mathcal V_{\gamma_1}))=\dim\mathcal U_1+\dim\mathcal W_2,
\]
where $\gamma_2\in (b_2,a_2)$. By induction we can prove that $n_i=\dim\mathcal W_i$ for $i=1,\ldots,m$. For $\gamma_m\in (b_m,\infty)$ we get from Proposition  \ref{po2.2} again that
\[
 n_{m+1}=\dim \mbox{Ker} \widetilde P_m=\dim \mathcal V_{\gamma_m}= \dim\mathcal W_{m+1}.
\]
We complete the proof of the theorem. \qed

\section{Proof of Theorem \ref{th1.4}}\label{s4}

\setcounter{section}{4}
\setcounter{equation}{0}\setcounter{theorem}{0}

To simplify the proof, in the next subsection we first introduce
some basic knowledge on the tensor product and then present some
necessary preliminary results on the linear operators defined in the space of the vector--valued
homogeneous polynomials.

\subsection{The tensor product and its applications}

Let $V_i$ for $i=1,\ldots,k$ be $n_i$ dimensional real vector
spaces and let $V=V_1\otimes\ldots \otimes V_k$ be their tensor
product. Then $V$ is an $n_1\ldots n_k$ dimensional real vector
space. The following properties on the tensor product can be found
in Lemma 5.4.1 of \cite{Ar98}, which will be used later on.

\begin{proposition}\label{po4.1}
On the tensor product the following statements hold.

\begin{itemize}

\item[$(i)$] The splitting $V_1=\oplus_{i=1}^{p_1}V_i^{(1)}$ and
$V_2=\oplus_{j=1}^{p_2}V_i^{(2)}$ induce the splitting
\[
V_1\otimes V_2=\oplus_{(i,j)=(1,1)}^{(p_1,p_2)} V_i^{(1)}\otimes
V_j^{(2)}.
\]

\item[$(ii)$] Let $T_i: V_i\rightarrow V_i$ for $i=1,\ldots,4$ be
linear operators defined in the vector spaces $V_i$ of dimension $n_i$. Then
\[
T_1\otimes T_2(x\otimes y)=T_1 x\otimes T_2 y\qquad \mbox{ for }
x\in V_1,\,\, y\in V_2,
\]
defines a linear operator $T_1\otimes T_2$ in $V_1\otimes V_2$.
Moreover we have
\begin{eqnarray*}
(T_1+T_2)\otimes T_3&=&T_1\otimes T_3+T_2\otimes T_3,\\
T_1\otimes(T_2+T_3)&=& T_1\otimes T_2+T_1\otimes T_3,\\
 (T_1\otimes
T_2)\circ (T_3\otimes T_4)&=&(T_1\circ T_3)\otimes
(T_2\circ T_4),\\
(T_1\otimes T_2)^{-1}&=&T_1^{-1}\otimes T_2^{-1}\quad \mbox{ if }
T_1 \mbox{ and } T_1 \mbox{ are invertible},\\
\det(T_1\times T_2)&=&(\det T_1)^{n_1}(\det T_2)^{n_2},\quad
\|T_1\otimes T_2\|=\|T_1\|\|T_2\|,
\end{eqnarray*}
where $T_i+T_j$ makes sense if they are defined in the same vector
space.

\item[$(iii)$] If $A=(a_{ij})$ and $B=(b_{ij})$ are the matrix
representations of $T_1$ and $T_2$ respectively, then $T_1\otimes
T_2$ has the matrix representation
\[
A\otimes B=\left(\begin{array}{ccc} a_{11} B & \cdots & a_{1,n_1}
B\\ \vdots &\ddots & \vdots\\
a_{n_1,1}B &\cdots & a_{n_1,n_1}B\end{array}\right),
\]
which is called the Kronecker product of $A$ and $B$.
\end{itemize}
\end{proposition}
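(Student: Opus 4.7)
The plan is to verify the three items by reducing them to bookkeeping in chosen bases and computations on simple tensors $x\otimes y$; since all claimed identities are linear, once they are checked on simple tensors they extend by linearity to the whole tensor product. Throughout I would fix bases $\{e_i^{(1)}\}_{i=1}^{n_1}$ of $V_1$ and $\{e_j^{(2)}\}_{j=1}^{n_2}$ of $V_2$ (and analogously for $V_3,V_4$), and use the standard identification of $V_1\otimes V_2$ with $\mathbb R^{n_1 n_2}$ under the ordered basis $\{e_i^{(1)}\otimes e_j^{(2)}\}$.

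For statement $(i)$, I would choose bases of $V_1$ and $V_2$ adapted to the decompositions $V_1=\bigoplus_i V_i^{(1)}$ and $V_2=\bigoplus_j V_j^{(2)}$, so that each basis vector lies in exactly one summand. Grouping the induced tensor-product basis according to which pair $(V_i^{(1)},V_j^{(2)})$ the factors come from exhibits $V_1\otimes V_2$ as the direct sum claimed; the universal property of the tensor product identifies each $V_i^{(1)}\otimes V_j^{(2)}$ with the span of the corresponding simple tensors inside $V_1\otimes V_2$, and a dimension count rules out any further overlap.

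For statement $(ii)$, the map $(x,y)\mapsto T_1 x\otimes T_2 y$ is bilinear, so by the universal property it extends to a unique linear operator $T_1\otimes T_2$ on $V_1\otimes V_2$. Bilinearity of $\otimes$ in the two operator slots and the composition rule $(T_1\otimes T_2)\circ(T_3\otimes T_4)=(T_1 T_3)\otimes(T_2 T_4)$ are immediate from the action on simple tensors, since $(T_1\otimes T_2)(T_3 x\otimes T_4 y)=T_1 T_3 x\otimes T_2 T_4 y$. The inverse formula then drops out by composing with $T_1^{-1}\otimes T_2^{-1}$ and using $I_{V_1}\otimes I_{V_2}=I_{V_1\otimes V_2}$. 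The determinant formula follows by simultaneously upper triangularizing $A$ and $B$ (over $\mathbb C$ if needed), invoking $(iii)$ to see that the Kronecker product is then upper triangular with diagonal entries $\lambda_i\mu_j$, and multiplying. The norm identity $\|T_1\otimes T_2\|=\|T_1\|\|T_2\|$ requires fixing the inner product on $V_1\otimes V_2$ induced from those on the factors; one direction comes from $\|T_1 x\otimes T_2 y\|=\|T_1 x\|\,\|T_2 y\|\le\|T_1\|\|T_2\|\|x\otimes y\|$ together with a singular-value argument to pass from simple tensors to general elements, while the reverse inequality is obtained by picking unit vectors $x,y$ that almost attain $\|T_1\|$ and $\|T_2\|$.

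For statement $(iii)$, I would expand $(T_1\otimes T_2)(e_i^{(1)}\otimes e_j^{(2)})=\sum_{k,l}a_{ki}b_{lj}\,e_k^{(1)}\otimes e_l^{(2)}$ in the ordered basis above and read off the stated block matrix. The main subtlety I foresee is the norm identity, since it needs an explicit choice of inner product on $V_1\otimes V_2$ and a nontrivial passage from simple to general tensors; all the remaining identities reduce to one-line verifications once $T_1\otimes T_2$ has been defined via the universal property. Since the full arguments are classical, I would mainly follow Lemma 5.4.1 of \cite{Ar98} and record only those steps that are actually used in the sequel.
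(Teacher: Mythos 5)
The paper does not actually prove this proposition: it is stated as a collection of classical facts and the text immediately preceding it simply refers the reader to Lemma 5.4.1 of \cite{Ar98} (with part of statement $(iii)$ of the related Proposition 4.3 attributed to \cite{WL08}). Your sketch therefore supplies an argument where the paper supplies only a citation, and the route you take --- universal property for the definition of $T_1\otimes T_2$, adapted bases for the splitting in $(i)$, computation on simple tensors for the algebraic identities, simultaneous triangularization for the determinant, and singular values for the norm --- is the standard one and is correct. Two small remarks. First, your triangularization argument, carried out to the end, gives $\det(T_1\otimes T_2)=\prod_{i,j}\lambda_i\mu_j=(\det T_1)^{n_2}(\det T_2)^{n_1}$; the exponents in the statement as printed are transposed (this is a typo in the proposition, immaterial when $n_1=n_2$ and not used quantitatively in the rest of the paper, where only the norm identity and the composition/inverse rules enter via Propositions 4.3--4.5). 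Second, you are right that the norm identity $\|T_1\otimes T_2\|=\|T_1\|\,\|T_2\|$ is norm-dependent: it holds for the spectral norm induced by the tensor inner product because the singular values of $A\otimes B$ are exactly the products $\sigma_i(A)\sigma_j(B)$, which also settles the passage from simple to general tensors that you flag as the main subtlety. With those caveats your proposal is complete for the purposes of the paper.
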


\begin{proposition}\label{po4.2}
If $\Phi_1(t,s)$ and $\Phi_2(t,s)$ are the evolution operators of
$\dot z_1=A(t)z_1$ and $\dot z_2=B(t)z_2$ respectively, then
$\Phi_1(t,s)\otimes \Phi_2(t,s)$ is the evolution operator of
\[
\dot z=(A_1(t)\otimes I_2+I_1\otimes A_2(t))z.
\]
\end{proposition}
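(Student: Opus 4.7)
The plan is to verify directly that $\Psi(t,s) := \Phi_1(t,s) \otimes \Phi_2(t,s)$ satisfies the matrix initial value problem that defines the evolution operator of $\dot z = (A(t) \otimes I_2 + I_1 \otimes B(t))z$, namely
\[
\tfrac{\partial}{\partial t}\Psi(t,s) = \bigl(A(t)\otimes I_2 + I_1 \otimes B(t)\bigr)\Psi(t,s), \qquad \Psi(s,s)=I.
\]
Since the evolution operator of a linear nonautonomous system is uniquely determined by these two conditions, establishing them finishes the proof.

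First I would check the initial condition. At $t=s$ we have $\Phi_1(s,s) = I_1$ and $\Phi_2(s,s) = I_2$, so by the definition of the Kronecker/tensor product in Proposition \ref{po4.1}(iii), $\Psi(s,s) = I_1 \otimes I_2$ is the identity on $V_1 \otimes V_2$, as required.

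Next I would verify the differential equation. Because the map $(X,Y) \mapsto X \otimes Y$ is bilinear (and reduces to the Kronecker product at the level of matrix entries), the usual Leibniz rule gives
\[
\tfrac{\partial}{\partial t}\bigl(\Phi_1(t,s)\otimes \Phi_2(t,s)\bigr) = \dot\Phi_1(t,s)\otimes \Phi_2(t,s) + \Phi_1(t,s)\otimes \dot\Phi_2(t,s).
\]
Substituting $\dot\Phi_1(t,s)=A(t)\Phi_1(t,s)$ and $\dot\Phi_2(t,s)=B(t)\Phi_2(t,s)$ and applying the composition identity $(T_1\otimes T_2)\circ(T_3\otimes T_4) = (T_1\circ T_3)\otimes(T_2\circ T_4)$ from Proposition \ref{po4.1}(ii), the right-hand side becomes
\[
(A(t)\otimes I_2)\bigl(\Phi_1(t,s)\otimes \Phi_2(t,s)\bigr) + (I_1 \otimes B(t))\bigl(\Phi_1(t,s)\otimes \Phi_2(t,s)\bigr),
\]
which by bilinearity (also from Proposition \ref{po4.1}(ii)) equals $(A(t)\otimes I_2 + I_1 \otimes B(t))\Psi(t,s)$.

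The only non-routine step is justifying the Leibniz rule for the tensor product of two differentiable matrix-valued functions; I would handle this simply by passing to the matrix representation (Kronecker product), where each entry of $\Phi_1(t,s)\otimes \Phi_2(t,s)$ is a product of a scalar entry of $\Phi_1$ with a scalar entry of $\Phi_2$, and the ordinary product rule then applies entrywise. Combined with the initial condition and the uniqueness of solutions of linear matrix ODEs, this yields the conclusion.
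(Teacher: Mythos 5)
Your proof is correct. The paper states Proposition~\ref{po4.2} without giving a proof (it is treated as a standard fact, presumably imported alongside the tensor-product facts from \cite{Ar98}), so there is no paper argument to compare against; your explicit verification --- Leibniz rule for the Kronecker product, the composition identity $(T_1\otimes T_2)(T_3\otimes T_4)=(T_1T_3)\otimes(T_2T_4)$, the initial condition $I_1\otimes I_2=I$, and uniqueness for linear matrix ODEs --- is exactly the natural route and fills in the omitted details cleanly. Your remark that the bilinearity/Leibniz step reduces to the entrywise scalar product rule at the level of the Kronecker matrix representation is the right way to justify it. (One cosmetic point: the statement's display writes $A_1(t)\otimes I_2 + I_1\otimes A_2(t)$ while the hypotheses name the coefficient matrices $A(t)$ and $B(t)$; you correctly read this as the intended $A(t)\otimes I_2 + I_1\otimes B(t)$.)
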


Now we recall some results related to the linear operators defined
in the space of the vector--valued homogeneous polynomials, part of
them can be found in Chapter 8 of \cite{Ar98}.

Let
\[
H_{n,k}(\R^d)=\left\{f=\sum\limits_{\tau\in\Z_+^n} f_\tau
x^\tau;\, f_\tau\in\R^d,|\tau|=k\right\},
\]
be the vector space of homogeneous polynomials in $n$ variables of
degree $k$ with their values in $\R^d$. Then
\[
H_{n,k}(\R^d)=H_{n,k}(\R^1)\otimes \R^d.
\]
Set $D=\dim (H_{n,k}(\R^1))$, we have
$
D=\left(\begin{array}{c} n+k-1\\
k\end{array}\right),
$
and $H_{n,k}(\R^1)$ and $\R^D$ are equivalent, denoted by $ H_{n,k}(\R^1)\cong \R^D$.
Let $(u_1,\ldots,u_d)$ be a basis of $\R^d$, and let
$\{x^\tau;\,\tau\in\Z_+^n,|\tau|=k\}$ be a basis of
$H_{n,k}(\R^1)$. Clearly $\{x^\tau u_i;\,
i=1,\ldots,d,\tau\in\Z_+^n,|\tau|=k\}$ is a basis of
$H_{n,k}(\R^d)$. Under this basis $H_{n,k}(\R^d)\cong \R^{Dd}$ via
the equivalence
\[
f=\sum\limits_{i=1}\limits^{d}\sum\limits_{|\tau|=k}f_{\tau,i}(t)x^\tau
u_i\longrightarrow (f_{\tau,i})\in\R^{Dd}.
\]

For any $n\times n$ matrix $A(t)$ we define a $D\times D$ matrix
\[
N(A)_k=\left(N_{\tau \sigma}^{(k)}(A)\right),\quad
(Ax)^\tau=\sum\limits_{\sigma\in\Z_+^n,|\sigma|=k}N_{\sigma
\tau}^{(k)} (A) x^\sigma,\quad \tau\in\Z_+^n, |\tau|=k .
\]
Usually the entries of $N(A)_k$ are nonlinear functions of the
entries of $A$.

\begin{proposition} \label{po4.3} Let $A,B$ be $n\times n$
matrices, and $k\ge 2$. The following statements hold.
\begin{itemize}

\item[$(i)$] $\|N(A)_k\|\le c\|A\|^k$, $N(I_2)=I_1$ and
$N(AB)_k=N(B)_kN(A)_k$, where $c$ is independent of $A$, and $I_2$ and $I_1$ are respectively
the $n\times n$ and  $D\times D$ unit matrices.

\item[$(ii)$] If $A$ is invertible then
$N(A^{-1})_k=(N(A)_k)^{-1}$.

\item[$(iii)$] If $A(t)=\mbox{\rm diag}(A_1(t),\ldots,A_p(t))$
with $A_i(t):\R\rightarrow \R^{n_i\times n_i}$ for $i=1,\ldots,p$,
then there exists a $D\times D$ permutation matrix $P$ independent
of $t$ under which $N(A)_k$ is similar to a block diagonal matrix
\[
\mbox{\rm diag}(N(A)_\tau,\,\tau\in\Z_+^p,|\tau|=k),
\]
with $N(A)_\tau:\R\rightarrow \R^{q_\tau\times q_\tau}$ and $
q_\tau=\prod\limits_{i=1}\limits^p\left(\begin{array}{c}\tau_i+n_i-1\\
\tau_i\end{array}\right)$. Moreover we have
\[
\|N(A)_\tau(t)\|\le c\prod\limits_{i=1}\limits^p\|A_i\|^{\tau_i},
\]
where $c$ is independent of $A(t)$.
\end{itemize}
\end{proposition}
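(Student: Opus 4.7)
My plan is to identify $N(A)_k$ with the matrix, in the monomial basis $\{x^\tau : |\tau|=k\}$, of the linear operator $L_A : H_{n,k}(\R) \to H_{n,k}(\R)$ defined by $(L_A f)(x) = f(Ax)$. The defining relation $(Ax)^\tau = \sum_\sigma N_{\sigma\tau}^{(k)}(A)\, x^\sigma$ says exactly that the $\tau$-column of $N(A)_k$ consists of the coordinates of $L_A(x^\tau)$ in this basis. Part $(i)$ then follows almost by inspection: the composition rule $L_A L_B = L_{BA}$ (since $(L_A L_B f)(x) = f(BAx)$) rewrites in matrix form as $N(AB)_k = N(B)_k N(A)_k$; the identity $L_{I_2} = \mathrm{id}$ gives $N(I_2) = I_1$; and because each entry of $N(A)_k$ is a homogeneous polynomial of degree $k$ in the entries of $A$ whose coefficients depend only on $n$ and $k$, one obtains $\|N(A)_k\| \le c\|A\|^k$ with $c = c(n,k)$. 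Part $(ii)$ I would read off at once from $(i)$ applied to $A A^{-1} = I$.

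For $(iii)$, I would split $\R^n = \R^{n_1} \oplus \cdots \oplus \R^{n_p}$ and, for each $\alpha \in \Z_+^n$ with $|\alpha|=k$, write $\alpha = (\alpha^{(1)},\ldots,\alpha^{(p)})$ with $\alpha^{(i)} \in \Z_+^{n_i}$. Define the block-degree $\tau(\alpha) := (|\alpha^{(1)}|, \ldots, |\alpha^{(p)}|) \in \Z_+^p$, which satisfies $|\tau(\alpha)| = k$. The factorization
\[
(Ax)^\alpha = \prod_{i=1}^p (A_i x^{(i)})^{\alpha^{(i)}}
\]
shows that $L_A$ preserves the block-degree, so the subspace $E_\tau = \mathrm{span}\{x^\alpha : \tau(\alpha) = \tau\} \subset H_{n,k}(\R)$ is $L_A$-invariant and $H_{n,k}(\R) = \bigoplus_\tau E_\tau$. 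The permutation matrix $P$ is the one that regroups the monomial basis by block-degree; it depends only on $n, n_1, \ldots, n_p, k$. The dimension count $\dim E_\tau = q_\tau = \prod_i \binom{\tau_i+n_i-1}{\tau_i}$ gives the claimed block sizes in the resulting block-diagonal form.

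For the norm estimate, I would use the natural isomorphism $E_\tau \cong H_{n_1,\tau_1}(\R)\otimes\cdots\otimes H_{n_p,\tau_p}(\R)$ sending $\prod_i (x^{(i)})^{\alpha^{(i)}}$ to $\otimes_i (x^{(i)})^{\alpha^{(i)}}$; this isomorphism intertwines $L_A|_{E_\tau}$ with $L_{A_1}\otimes\cdots\otimes L_{A_p}$, so after the fixed permutation the block $N(A)_\tau$ coincides with $N(A_1)_{\tau_1}\otimes\cdots\otimes N(A_p)_{\tau_p}$. The multiplicativity $\|T_1\otimes T_2\| = \|T_1\|\|T_2\|$ from Proposition \ref{po4.1}$(ii)$, combined with part $(i)$, then yields
\[
\|N(A)_\tau\| = \prod_{i=1}^p \|N(A_i)_{\tau_i}\| \le c\prod_{i=1}^p \|A_i\|^{\tau_i},
\]
as required. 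The step I expect to require the most care is the tensor-product identification: matching the monomial ordering on each $E_\tau$ with the ordering induced by the Kronecker product so that the matrix entries literally coincide is combinatorial bookkeeping that must be done explicitly enough to pin down $P$ and the blocks $N(A)_\tau$. Everything else is a direct consequence of viewing $N(\cdot)_k$ as the functorial matrix representation of $L_{(\cdot)}$.
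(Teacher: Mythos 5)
Your proof is correct and self-contained. The paper itself does not supply a proof of this proposition; it only cites Lemma~8.1.2 of Arnold's book for parts~$(i)$--$(ii)$ and Proposition~5 of Wu--Li for part~$(iii)$. Your argument fills that gap cleanly. The organizing idea---reading $N(A)_k$ as the matrix of the pullback operator $L_A: f\mapsto f\circ A$ on $H_{n,k}(\R^1)$ in the monomial basis---is exactly the right one and makes the three statements essentially formal: the anti-homomorphism $L_A L_B=L_{BA}$ gives $N(AB)_k=N(B)_k N(A)_k$; $L_{I}=\mathrm{id}$ gives $N(I_2)=I_1$; homogeneity of degree $k$ of the entries of $N(A)_k$ in the entries of $A$ gives the bound; $(ii)$ is the formal inverse; and the block-degree decomposition $H_{n,k}(\R^1)=\bigoplus_\tau E_\tau$ with $E_\tau\cong H_{n_1,\tau_1}(\R^1)\otimes\cdots\otimes H_{n_p,\tau_p}(\R^1)$ identifies $L_A|_{E_\tau}$ with $L_{A_1}\otimes\cdots\otimes L_{A_p}$ and gives $(iii)$ via $\|T_1\otimes T_2\|=\|T_1\|\,\|T_2\|$. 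One small remark on the last step: the statement you derive, $\|N(A)_\tau\|=\prod_i\|N(A_i)_{\tau_i}\|$, holds with equality when the norm is the operator norm induced by a Euclidean structure under which the basis permutation is orthogonal and the tensor factors are orthogonal; for a general submultiplicative norm you only get $\lesssim$, but since the proposition already allows a constant $c$ independent of $A$, this causes no trouble. Your flagged concern about the combinatorial bookkeeping of the permutation $P$ and the ordering consistent with the Kronecker product is the right place to be careful, but it is indeed only bookkeeping---$P$ regroups the monomial basis by block-degree and then orders each block lexicographically by $(\alpha^{(1)},\dots,\alpha^{(p)})$, which is exactly the Kronecker ordering.
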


\begin{proof}
Statements $(i)$ and $(ii)$ can be found in Lemma 8.1.2 of
\cite{Ar98}. The proof of statement $(iii)$ is given in
Proposition 5 of \cite{WL08}.
\end{proof}

For $k\ge 2$ we define a linear operator
\begin{eqnarray*}
T(A)_k:\qquad H_{n,k}(\R^1)\quad &\longrightarrow & \quad H_{n,k}(\R^1)\nonumber\\
 h=\sum\limits_{\tau\in\Z_+^n,|\tau|=k}h_\tau x^\tau &\longrightarrow& \frac{\p
h}{\p x} Ax\triangleq T(A)_k(h).
\end{eqnarray*}
Proposition 8.3.4 of \cite{Ar98} and Proposition 6 of \cite{WL08}
established a relation between the evolution operators
$\Phi_{-T(A)_k}(t)$ and $\Phi_A(t)$.
\begin{proposition}\label{po4.4}
Let $\Phi_A(t,s)$ be the evolution operator of $\dot x=A(t)x$.
Then
\[
\Phi_{-T(A)_k}(t,s)=N(\Phi_{-A}(t,s))_k=N(\Phi_A(t,s))_k^{-1}.
\]
\end{proposition}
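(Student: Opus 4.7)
The plan is to exhibit the evolution operator $\Phi_{-T(A)_k}(t,s)$ concretely as the pull-back of functions by the backward flow of $\dot x = A(t)x$, and then translate this into the monomial basis to read off the matrix $N(\cdot)_k$. The whole argument rests on a single chain-rule computation combined with uniqueness for the linear initial value problem on $H_{n,k}(\R^1)$.

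First I would introduce the candidate operator $\mathcal L(t,s): H_{n,k}(\R^1)\to H_{n,k}(\R^1)$ defined by $(\mathcal L(t,s)H)(x) := H(\Phi_A(s,t)x)$, and verify that it satisfies $\dot h=-T(A(t))_k h$ with $\mathcal L(s,s)=\mbox{Id}$. Differentiating the identity $\Phi_A(t,s)\Phi_A(s,t)=I$ in $t$ gives $\partial_t\Phi_A(s,t)=-\Phi_A(s,t)A(t)$, and the chain rule then yields
\[
\partial_t(\mathcal L(t,s)H)(x)
= \nabla H(\Phi_A(s,t)x)\bigl(-\Phi_A(s,t)A(t)\bigr)x
= -\frac{\p}{\p x}\bigl[(\mathcal L(t,s)H)(x)\bigr]\,A(t)x,
\]
which is exactly $-T(A(t))_k(\mathcal L(t,s)H)(x)$. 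Together with $\mathcal L(s,s)=\mbox{Id}$ and uniqueness for the linear system on the finite-dimensional space $H_{n,k}(\R^1)$, this forces $\Phi_{-T(A)_k}(t,s)=\mathcal L(t,s)$.

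Second, I would pass to the monomial basis $\{x^\tau:\tau\in\Z_+^n,\,|\tau|=k\}$. Applying the identification from the previous step to $H=x^\tau$ and using the very definition of $N(\cdot)_k$,
\[
[\Phi_{-T(A)_k}(t,s)\,x^\tau](x)
= (\Phi_A(s,t)x)^\tau
= \sum_{|\sigma|=k}N^{(k)}_{\sigma\tau}(\Phi_A(s,t))\,x^\sigma,
\]
so the matrix representation of $\Phi_{-T(A)_k}(t,s)$ in this basis is $N(\Phi_A(s,t))_k$. Since $\Phi_A(s,t)=\Phi_A(t,s)^{-1}$, Proposition 4.3$(ii)$ immediately gives $N(\Phi_A(s,t))_k=N(\Phi_A(t,s))_k^{-1}$, which is the rightmost equality in the claim. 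The middle equality follows by the same pull-back construction applied to the system $\dot x=-A(t)x$, whose evolution operator is $\Phi_{-A}(t,s)$, and matches $\Phi_A(s,t)$ at the level of the induced action on $H_{n,k}(\R^1)$ (equivalently, via Proposition 4.3$(ii)$ after identifying the matrix induced by the inverse flow).

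The only delicate point I anticipate is keeping the signs and the left/right ordering straight in the derivative of $\Phi_A(s,t)$: one must differentiate the \emph{second} argument of $\Phi_A$, which produces a right-multiplication by $A(t)$ rather than a left-multiplication, and this is precisely what compensates the minus sign in $-T(A)_k$. Beyond this one line of calculus, the argument is bookkeeping: the functorial identities $N(AB)_k=N(B)_kN(A)_k$ and $N(A^{-1})_k=N(A)_k^{-1}$ of Proposition 4.3 do all the remaining work.
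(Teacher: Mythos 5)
Your pull-back construction $(\mathcal L(t,s)H)(x)=H(\Phi_A(s,t)x)$ and the chain-rule verification that $\mathcal L$ solves $\dot h=-T(A(t))_k h$ with $\mathcal L(s,s)=\mathrm{Id}$ are correct, and reading off the matrix $N(\Phi_A(s,t))_k$ in the monomial basis is the right move. This gives a clean, self-contained derivation of $\Phi_{-T(A)_k}(t,s)=N(\Phi_A(s,t))_k=N(\Phi_A(t,s))_k^{-1}$, which is actually more informative than the paper, since the paper only cites Proposition 8.3.4 of Arnold and Proposition 6 of Wu--Li without giving an argument. These two equalities are also the only ones the paper actually uses (see Proposition 4.5).

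The middle equality $N(\Phi_{-A}(t,s))_k=N(\Phi_A(t,s))_k^{-1}$ is where your argument has a genuine gap. You assert that $\Phi_{-A}(t,s)$ ``matches $\Phi_A(s,t)$ at the level of the induced action,'' but for a general time-dependent $A(t)$ these two matrices are different: $\Phi_A(s,t)$ solves the \emph{right}-adjoint equation $\partial_t Y=-Y A(t)$, while $\Phi_{-A}(t,s)$ solves $\partial_t Y=-A(t)Y$, and these coincide only when $A(t)$ commutes with the flow (e.g.\ $A$ constant, or $A(t)$ diagonal as in some of the paper's later applications). A concrete counterexample: if $A(t)=A_1$ on $[0,1)$ and $A(t)=A_2$ on $[1,2]$ with $[A_1,A_2]\ne 0$, then $\Phi_A(0,2)=e^{-A_1}e^{-A_2}$ while $\Phi_{-A}(2,0)=e^{-A_2}e^{-A_1}$; and since for $k\ge 2$ the map $B\mapsto N(B)_k$ is injective up to a sign on $\mathrm{GL}_n(\R)$, the $N(\cdot)_k$ images then differ as well. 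Invoking Proposition 4.3$(ii)$ does not help: that gives exactly $N(\Phi_A(t,s))_k^{-1}=N(\Phi_A(s,t))_k$, which is the rightmost equality you already have, and says nothing about $\Phi_{-A}$. So either the middle equality needs the extra hypothesis that $A(t)$ commutes with $\Phi_A$ (in which case you should state it), or the statement in the paper is imprecise at that point; in any event you should not present the ``matching'' as established.
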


Now we define the  linear operator
\begin{eqnarray}\label{ell}
L_k(t):\qquad H_{n,k}(\R^n) &\longrightarrow & \quad H_{n,k}(\R^n)\\
 h(x) &\longrightarrow& A(t)h(x)-\frac{\p
h(x)}{\p x} A(t)x.\nonumber
\end{eqnarray}
Since $H_{n,k}(\R^n)=H_{n,k}(\R^1)\otimes \R^n$, we can write the
operator $L_k(t)$ in
\[
L_k(t)=I_1\otimes A-T(A)_k\otimes I_2,
\]
where $I_1$ and $I_2$ are the unit matrices on $H_{n,k}(\R^1)$ and
$\R^n$, respectively.

By Propositions \ref{po4.3} and \ref{po4.4}, we get from
Proposition 8.3.4 of \cite{Ar98} and Proposition 6 of \cite{WL08}
a relation between the evolution operators $\Phi_{L_k}(t)$ and
$\Phi_A(t)$.
\begin{proposition}\label{po4.5}
Let $\Phi_A(t,s)$ be the evolution operator of $\dot x=A(t)x$.
Then the following statements hold.
\begin{itemize}

\item[$(a)$] $
\Phi_{L_k}(t,s)=\Phi_{-T(A)_k}(t,s)\otimes\Phi_A(t,s)
=N(\Phi_A(t,s))_k^{-1}\otimes\Phi_A(t,s)=N(\Phi_A(s,t))\otimes\Phi_A(t,s)$.

\item[$(b)$] If $A=\mbox{\rm diag}(A_1(t),\ldots,A_p(t))$, then
\begin{eqnarray*}
N(\Phi_A(s,t))\otimes\Phi_A(t,s)&=&\mbox{\rm
diag}(N(\Phi_A(s,t))_\tau\otimes\Phi_{A_j}(t,s),\,\tau\in\Z_+^p,|\tau|=k,j=1,\ldots,p),\\
\|N(\Phi_A(s,t))_\tau\|&\le &
c\prod\limits_{i=1}\limits^{p}\|\Phi_{A_i}(s,t)\|^{\tau_i},
\end{eqnarray*}
where $c$ depends only on $n$, $k$ and the norm.
\end{itemize}
\end{proposition}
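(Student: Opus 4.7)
The plan is to deduce part (a) directly from the tensor decomposition
$L_k(t)=I_1\otimes A(t)-T(A(t))_k\otimes I_2$
recorded just above the statement, by chaining together Propositions~\ref{po4.2}, \ref{po4.4} and \ref{po4.3}(ii). Recognizing $L_k$ as a Kronecker sum $(-T(A)_k)\otimes I_2 + I_1\otimes A$, I would apply Proposition~\ref{po4.2} with the two component equations $\dot z_1=-T(A(t))_k z_1$ on $H_{n,k}(\mathbb R^1)$ and $\dot z_2=A(t)z_2$ on $\R^n$; its evolution operator is then the Kronecker product, so
\[
\Phi_{L_k}(t,s)=\Phi_{-T(A)_k}(t,s)\otimes\Phi_A(t,s).
\]
This gives the first equality. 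The second equality follows from Proposition~\ref{po4.4}, which states $\Phi_{-T(A)_k}(t,s)=N(\Phi_A(t,s))_k^{-1}$. The third equality uses the invertibility identity $\Phi_A(t,s)^{-1}=\Phi_A(s,t)$ together with Proposition~\ref{po4.3}(ii), giving $N(\Phi_A(t,s))_k^{-1}=N(\Phi_A(t,s)^{-1})_k=N(\Phi_A(s,t))_k$.

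For part (b), I would exploit the block-diagonal structure. Since $A(t)=\mbox{diag}(A_1(t),\ldots,A_p(t))$, its evolution operator splits as $\Phi_A(t,s)=\mbox{diag}(\Phi_{A_1}(t,s),\ldots,\Phi_{A_p}(t,s))$, and the same holds for $\Phi_A(s,t)$. Applying Proposition~\ref{po4.3}(iii) to the block-diagonal matrix $\Phi_A(s,t)$ yields a $t,s$-independent permutation matrix $P$ under which $N(\Phi_A(s,t))_k$ is similar to the block diagonal $\mbox{diag}(N(\Phi_A(s,t))_\tau;\,|\tau|=k)$, together with the asserted norm estimate
\[
\|N(\Phi_A(s,t))_\tau\|\le c\prod_{i=1}^p\|\Phi_{A_i}(s,t)\|^{\tau_i}.
\]
Then using Proposition~\ref{po4.1}(i) and (ii) repeatedly, the Kronecker product of a block diagonal with another block diagonal is itself block diagonal with the pairwise Kronecker blocks, so
\[
N(\Phi_A(s,t))_k\otimes\Phi_A(t,s)\sim\mbox{diag}\bigl(N(\Phi_A(s,t))_\tau\otimes\Phi_{A_j}(t,s)\bigr)_{|\tau|=k,\,j=1,\ldots,p},
\]
which is exactly the claimed formula.

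No step is a serious obstacle, since each equality is by citation of a listed preliminary. The only point that requires a bit of care is the bookkeeping in part (b): when tensoring the $P$-conjugated block form of $N(\Phi_A(s,t))_k$ with the block form of $\Phi_A(t,s)$, one must verify that the outer similarity by $P\otimes I$ indeed reorders the basis of $H_{n,k}(\R^1)\otimes\R^n$ into the doubly-indexed list $(\tau,j)$, and that this similarity does not affect the block-diagonal description (since $P$ is independent of $t$ and $s$, it commutes with the time derivative and preserves the ODE structure). After noting this, one simply transcribes the norm bound on each Kronecker block via $\|T_1\otimes T_2\|=\|T_1\|\|T_2\|$ from Proposition~\ref{po4.1}(ii), which together with the estimate from Proposition~\ref{po4.3}(iii) immediately yields the stated bound.
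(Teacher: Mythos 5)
Your proof is correct and follows essentially the same route the paper intends: the paper offers no written proof but cites Propositions~\ref{po4.1}--\ref{po4.4} (together with Proposition~8.3.4 of \cite{Ar98} and Proposition~6 of \cite{WL08}), and your argument chains exactly these together in the natural order — Proposition~\ref{po4.2} for the Kronecker-sum factorization $\Phi_{L_k}=\Phi_{-T(A)_k}\otimes\Phi_A$, Proposition~\ref{po4.4} for $\Phi_{-T(A)_k}=N(\Phi_A)_k^{-1}$, Proposition~\ref{po4.3}$(ii)$ for $N(\Phi_A(t,s))_k^{-1}=N(\Phi_A(s,t))_k$, and Propositions~\ref{po4.3}$(iii)$ and~\ref{po4.1}$(i)$ for the block decomposition and norm bound in part~$(b)$. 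One small caveat you gesture at but do not fully nail down: passing from $\mbox{diag}(N_\tau)\otimes\mbox{diag}(\Phi_{A_j})$ to the doubly-indexed block diagonal $\mbox{diag}(N_\tau\otimes\Phi_{A_j})$ requires a further ``perfect shuffle'' permutation beyond the $P\otimes I$ conjugation, since the matrix Kronecker product of two block diagonals is only block diagonal at the operator (tensor-splitting) level; the additional permutation is still $t,s$-independent, so the conclusion stands, and the paper's own statement of part~$(b)$ glosses over the same point.
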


\subsection{Proof of Theorem \ref{th1.4}}

To simplify notations we write system \eqref{1.7} in
\begin{equation}\label{4.21}
\dot x=A(t)x+f(t,x),
\end{equation}
with $A(t)=\mbox{\rm diag}(A_1(t),\ldots,A_m(t))$ and
$f(t,x)=(f_1(t,x),\ldots,f_m(t,x))^T$, where $T$ denotes the
transpose of a matrix. Assume that there exists a near identity
formal transformation $x=y+h(t,y)$ under which system \eqref{4.21}
is transformed into
\begin{equation}\label{4.22}
\dot y=A(t)y+g(t,y),
\end{equation}
where $g(t,y)$ is a formal series in $y$. Then $h(t,y)$ should satisfy the
following equation
\begin{equation}\label{4.23}
\frac{\p h}{\p t}=A(t)h(t,y)-\frac{\p h}{\p
y}A(t)y+f(t,y+h(t,y))-\frac{\p h}{\p y}g(t,y)-g(t,y).
\end{equation}
Set $w(t,y)\sim \sum\limits_{i=2}\limits^\infty w_i(t,y)$ with
$w\in\{h,f,g\}$, and $w_i(t,y)$ a homogeneous polynomial of
degree $i$ in $y$. Equation \eqref{4.23} can be written in
\begin{equation}\label{4.23.1}
\frac{\p h_k(t,y)}{\p t}=L_k(t)h_k(t,y)+F_k(t,y)-g_k(t,y),\quad
k=2,3,\ldots
\end{equation}
where $F_k(t,y)$ is a homogeneous polynomial in $y$ of degree $k$
which is a function of $h_2,\ldots,h_{k-1}$ obtained from the
expansion of $f(t,y+h(t,y))-\frac{\p h}{\p y}g(t,y)$.
We note that $F_k(t,y)$ are successively known.
Recall that $L_k(t)$ is the linear operator defined in \eqref{ell}.

In the base $\{x^\tau u_i;\,\tau\in\Z_+^p,|\tau|=k,i=1,\ldots,n\}$
of $H_{n,k}(\R^n)$ each homogeneous polynomial $w_k(t,y)$ with
$w\in\{h,F,g\}$ is uniquely determined by its coefficients. Let
$w_k(t)$  with $w\in\{h,F,g\}$ be a vector--valued function of
dimension $Dn$ which is formed by the coefficients of $w_k(t,y)$
in the given base. Then we get from \eqref{4.23.1} that
\begin{equation}\label{4.24}
\frac{d}{dt}h_k(t)=L_k(t)h_k(t)+F_k(t)-g_k(t),
\end{equation}
where for simplicity to notations we still use $L_k(t)$ to denote the linear operator acting on $h_k(t)$.

Since $A(t)$ is a block diagonal matrix, by Proposition \ref{po4.5} the
evolution operator $\Phi_{L_k}(t,s)$ of
$\frac{d}{dt}h_k(t)=L_k(t)h_k(t)$ is also a block diagonal matrix.
According to the block diagonal form of $\Phi_{L_k}(t,s)$ given in
Proposition \ref{po4.5} we separate the vector space $\R^{Dn}$ in
the direct sum of the subspaces $\R^{q_\tau n_j}$ for
$j=1,\ldots,m,\tau\in\Z_+^m,|\tau|=k$, where $n_j$ is the order of
the matrix $A_j$ and $q_\tau$ is defined in statement $(iii)$ of
Proposition \ref{po4.3}. Correspondingly we have
\[
h_k(t)=\bigoplus\limits_{\tau,j}h_k^{(\tau,j)}(t).
\]
So system \eqref{4.24} can be written in
\begin{equation}\label{4.25}
\frac{d}{dt}h_k^{(\tau,j)}(t)=L_k^{(\tau,j)}(t)h_k^{(\tau,j)}(t)
+F_k^{(\tau,j)}(t)-g_k^{(\tau,j)}(t),
\end{equation}
with $\tau\in\Z_+^m,|\tau|=k$ and $j=1,\ldots,m$, where
$L_k^{(\tau,j)}(t)$ is the diagonal entry of the block diagonal
matrix $L_k(t)$.

Furthermore we separate
$p_k^{(\tau,j)}(t)=p_{k1}^{(\tau,j)}(t)+p_{k2}^{(\tau,j)}(t)$ with $p\in \{F,g\}$ in
such a way that the former is corresponding to those $(\tau,j)$
such that
$[a_j,b_j]\cap\sum\limits_{i=1}\limits^m\tau_i[a_i,b_i]=\emptyset$
and the latter is corresponding to those $(\tau,j)$ such that
$[a_j,b_j]\cap\sum\limits_{i=1}\limits^m\tau_i[a_i,b_i]\ne\emptyset$.
According to this decomposition system \eqref{4.25} can be decomposed into two subsystems:
\begin{equation}\label{4.25.1}
\frac{d}{dt}h_k^{(\tau,j)}(t)=L_k^{(\tau,j)}(t)h_k^{(\tau,j)}(t)
+F_{k1}^{(\tau,j)}(t)-g_{k1}^{(\tau,j)}(t),
\end{equation}
\begin{equation}\label{4.25.2}
\frac{d}{dt}h_k^{(\tau,j)}(t)=L_k^{(\tau,j)}(t)h_k^{(\tau,j)}(t)
+F_{k2}^{(\tau,j)}(t)-g_{k2}^{(\tau,j)}(t).
\end{equation}

In the case
$[a_j,b_j]\cap\sum\limits_{i=1}\limits^m\tau_i[a_i,b_i]\ne\emptyset$,
i.e. for those $(\tau,j)$ the nonuniform dichotomy spectrum is
resonant, we choose $g_{k2}^{(\tau,j)}(t)=F_{k2}^{(\tau,j)}(t)$, and
consequently equation \eqref{4.25.2} has the trivial solution
$h_{k}^{(\tau,j)}(t)=h_{k2}^{(\tau,j)}(t)=0$.

In the case
$[a_j,b_j]\cap\sum\limits_{i=1}\limits^m\tau_i[a_i,b_i]=\emptyset$,
i.e. for those $(\tau,j)$ the nonuniform dichotomy spectrum is not
resonant,  we have either $a_j>\tau_1b_1+\ldots+\tau_mb_m$ or
$b_j<\tau_1a_1+\ldots+\tau_ma_m$. In this case for any value of
$g_{k1}^{(\tau,j)}(t)$ equation \eqref{4.25.1} has a unique solution.
For simplicity we take $g_{k1}^{(\tau,j)}(t)=0$. By the variation of
constants formula we obtain from \eqref{4.25.1} that
$h_k^{(\tau,j)}(t)=h_{k1}^{(\tau,j)}(t)$ with either
\begin{equation}\label{4.26}
h_{k1}^{(\tau,j)}(t)=\int\limits_{-\infty}\limits^t\Phi_{L_k}^{(\tau,j)}(t,s)
F_{k1}^{(\tau,j)}(s)ds, \quad \mbox{\rm if }
b_j<\tau_1a_1+\ldots+\tau_ma_m,
\end{equation}
or
\begin{equation}\label{4.27}
h_{k1}^{(\tau,j)}(t)=-\int\limits_{t}\limits^\infty\Phi_{L_k}^{(\tau,j)}(t,s)
F_{k1}^{(\tau,j)}(s)ds, \quad \mbox{\rm if }
a_j>\tau_1b_1+\ldots+\tau_mb_m,
\end{equation}
where $\Phi_{L_k}^{(\tau,j)}(t,s)$ is the evolution operator of
the linear equation
$\frac{d}{dt}h_k^{(\tau,j)}(t)=L_k^{(\tau,j)}(t)h_k^{(\tau,j)}(t)$.

Combining the two cases, we get $g_k^{(\tau,j)}=g_{k2}^{(\tau,j)}=F_{k2}^{(\tau,j)}$ and $h_k^{(\tau,j)}=h_{k1}^{(\tau,j)}$.
Since the integrals in \eqref{4.26} and \eqref{4.27} are improper, we need to prove that they are convergent and so the functions in \eqref{4.26} and \eqref{4.27} are well defined.

\begin{proposition} \label{p111}
Let $k\in (3,\,\sigma/\varrho)$, where $\sigma=\min\{-\alpha_i,\beta_i;\,i=1,\ldots,m\}$ and  $\varrho=\max\{\mu_i,\nu_i;\, i=1,\ldots,m\}$. The following statements hold.
\begin{itemize}
\item[$(a)$]  If  $b_j<\tau_1a_1+\ldots+\tau_ma_m$ we have for $2\le r<2k-4$
\begin{equation}\label{4.212}
\|h_{r1}^{(\tau,j)}(t)\|\le
C_{r,\tau,j}\exp\left(-\frac{r-1}{2}(2k-r-4)\varrho|t|\right),
\end{equation}
where $C_{r,\tau,j}$ is a positive constant.
\item[$(b)$] If $a_j>\tau_1b_1+\ldots+\tau_mb_m$, then $h_{r1}^{(\tau,j)}(t)$ satisfies the same estimation as that given in \eqref{4.212} for  $2\le r<2k-4$ with probably the coefficient $C_{r,\tau,j}$ different.
\end{itemize}
\end{proposition}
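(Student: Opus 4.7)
The plan is to prove Proposition \ref{p111} by induction on $r$, handling cases (a) and (b) in parallel, and establishing at each step both absolute convergence of the improper integrals in \eqref{4.26}/\eqref{4.27} and the quantitative bound \eqref{4.212}. The base case is $r=2$, where $F_2(t,y)=f_2(t,y)$, so each coefficient vector $F_{21}^{(\tau,j)}(t)$ is a component of $p_2(t)$ and inherits the bound $\|p_2(t)\|\le d_2 e^{-k\varrho|t|}$ from \eqref{eth141}. For $r>2$ the inductive hypothesis supplies estimates on $h_2,\ldots,h_{r-1}$; expanding the right-hand side of \eqref{4.23} and collecting terms of total degree $r$ in $y$, one verifies by direct combinatorial bookkeeping that
\[
\|F_{r1}^{(\tau,j)}(t)\|\le D_r\exp\bigl(-c_r\varrho|t|\bigr),\qquad c_r=(r-1)k-\tfrac{(r+3)(r-2)}{2},
\]
where the precise quadratic in \eqref{eth141} is tailored so this recursion closes: for instance at $r=3$ the term $\frac{\p f_2}{\p y}\cdot h_2$ contributes rate $c_2+\tfrac{1}{2}(2k-6)=k+(k-3)=2k-3=c_3$, and analogous identities hold in general.

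Next, to bound $h_{r1}^{(\tau,j)}$, Proposition \ref{po4.5}(b) gives
\[
\|\Phi_{L_k}^{(\tau,j)}(t,s)\|\le c\prod_{i=1}^{m}\|\Phi_{A_i}(s,t)\|^{\tau_i}\|\Phi_{A_j}(t,s)\|,
\]
and in case (a), for $s\le t$, applying the ``$t\le s$'' inequality of \eqref{4.2100} to each $\Phi_{A_i}(s,t)$ and the ``$t\ge s$'' inequality to $\Phi_{A_j}(t,s)$ yields
\[
\|\Phi_{L_k}^{(\tau,j)}(t,s)\|\le C\exp\bigl(-\delta(t-s)+(\textstyle\sum_i\tau_i\nu_i)|t|+\mu_j|s|\bigr),
\]
with $\delta=\sum_i\tau_i(\sigma_i+\beta_i)-(\rho_j+\alpha_j)$. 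Choosing $\rho_j$ arbitrarily close to $b_j$ and $\sigma_i$ close to $a_i$, the non-resonance condition $b_j<\sum_i\tau_i a_i$ makes the spectral portion of $\delta$ positive, while the built-in dichotomy portion $\sum_i\tau_i\beta_i-\alpha_j\ge(r+1)\sigma$ combined with $\sigma>k\varrho$ forces $\delta>(c_r-1)\varrho$. This is precisely what is needed for absolute convergence of \eqref{4.26}. Case (b) proceeds symmetrically, using the complementary inequalities of \eqref{4.2100} on the interval $[t,+\infty)$, with the roles of $(\rho,\mu,\alpha)$ and $(\sigma,\nu,\beta)$ interchanged.

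The target bound \eqref{4.212} then follows by substituting the inductive estimate on $\|F_{r1}^{(\tau,j)}\|$ into \eqref{4.26}, splitting the integration at $s=0$ so that $|s|=\mp s$, and evaluating the resulting one-variable exponential integrals; after absorbing the $|s|$-factor into the decay of $F_{r1}$ (which produces an additional $\varrho|t|$ loss at worst), the nonuniform factor $e^{(\sum_i\tau_i\nu_i)|t|}$ contributes at most $e^{r\varrho|t|}$, and the algebraic identity
\[
c_r-(r+1)=(r-1)k-\tfrac{(r+3)(r-2)}{2}-(r+1)=\tfrac{r-1}{2}(2k-r-4),
\]
shows the surviving exponent is exactly the one claimed. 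The principal technical obstacle will be the combinatorial verification of the $F_r$ bound, namely checking that every term in the Taylor expansion of $f(t,y+h(t,y))-\frac{\p h}{\p y}g(t,y)$ decays at least at rate $c_r\varrho$; the restrictions $\sigma/\varrho>4$, $k\in(3,\sigma/\varrho)$ and $r<2k-4$ are then precisely what makes $\delta$, $c_r$ and $\frac{r-1}{2}(2k-r-4)$ simultaneously positive, giving the proposition real content.
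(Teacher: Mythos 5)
Your proposal is correct and matches the paper's proof essentially step for step: both proceed by induction on $r$, both invoke Proposition \ref{po4.5} to bound $\Phi_{L_k}^{(\tau,j)}$ via \eqref{4.211}, both rely on the inductive bound $\|F_r^{(\tau,j)}\|\le D_r e^{-c_r\varrho|t|}$ with $c_r=(r-1)k-\tfrac{(r+3)(r-2)}{2}$ (which is exactly the paper's \eqref{e111}), and both finish by evaluating the exponential integrals in \eqref{4.26}/\eqref{4.27} after splitting at $s=0$, with the loss $\mu_j+\sum_i\tau_i\nu_i\le(|\tau|+1)\varrho$ producing precisely the identity $c_r-(r+1)=\tfrac{r-1}{2}(2k-r-4)$. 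Your explicit naming of the combinatorial verification of the $F_r$ bound as the technical crux is an honest reading of the step the paper disposes of in one sentence around \eqref{e112}, and the decomposition of $\delta$ into a spectral part and a dichotomy part is a clean way of saying what the paper accomplishes by introducing $D_{j\tau}$ and $\omega_{\tau,j}$.
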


\begin{proof}

From Proposition \ref{po4.5} we have
\[
\Phi_{L_k}^{(\tau,j)}(t,s)=N(\Phi_A(s,t))_\tau\otimes
\Phi_{A_j}(t,s),
\]
and
\begin{equation}\label{4.29}
\|\Phi_{L_k}^{(\tau,j)}(t,s)\|=
\|N(\Phi_A(s,t))_\tau\|\|\Phi_{A_j}(t,s)\| \le
c_k\prod\limits_{i=1}\limits^{m}\|\Phi_{A_i}(s,t)\|^{\tau_i}\|\Phi_{A_j}(t,s)\|,
\end{equation}
where $c_k$ depends only on $k, n$ and the norm.

Under the nonresonant conditions we define
\[
D_{j\tau}=\left\{\begin{array}{l}
\tau_1a_1+\ldots+\tau_ma_m-b_j\quad
\mbox{\rm if } b_j<\tau_1a_1+\ldots+\tau_ma_m,\\
a_j-\tau_1b_1-\ldots-\tau_mb_m \quad \mbox{\rm if }
a_j>\tau_1b_1+\ldots+\tau_mb_m.\end{array}\right.
\]
Set $\e_{j\tau}=D_{j\tau}/2(|\tau|+1)$. For
$\sigma_j\in[a_j-\e_{j\tau},a_j)$ and
$\rho_i\in(b_j,b_j+\e_{j\tau}]$, we can check that
\begin{eqnarray*}
\rho_j-\tau_1\sigma_1-\ldots-\tau_m\sigma_m&\le& -\frac 12
D_{j\tau}\quad\mbox{\rm if } b_j<\tau_1a_1+\ldots+\tau_ma_m,\\
\sigma_j-\tau_1\rho_1-\ldots-\tau_m\rho_m&\ge&\,\,\,\, \frac 12
D_{j\tau} \quad \mbox{\rm if } a_j>\tau_1b_1+\ldots+\tau_mb_m.
\end{eqnarray*}
So systems $\dot z=(A_i(t)-\sigma_i I)z$ and $\dot
z=(A_i(t)-\rho_i I)z$ admit nonuniform exponential dichotomies.
Hence there exist $K_i\ge 1$, $\alpha_i<0$, $\beta_i>0$, and
$\mu_i,\nu_i\ge 0$ with $\alpha_i+\mu_i<0$ and $\beta_i-\nu_i>0$
such that
\begin{equation}\label{4.210}
\begin{array}{l}
\|\Phi_{A_i}(t,s)\|\le
K_ie^{\rho_i(t-s)}e^{\alpha_i(t-s)+\mu_i|s|}\quad \mbox{\rm for } t\ge s,\\
\|\Phi_{A_i}(t,s)\|\le
K_ie^{\sigma_i(t-s)}e^{\beta_i(t-s)+\nu_i|s|}\quad \mbox{\rm for }
t\le s.
\end{array}
\end{equation}
Combining \eqref{4.29} and \eqref{4.210} we obtain that
\begin{equation}\label{4.211}
\|\Phi_{L_k}^{(\tau,j)}(t,s)\|\le \left\{\begin{array}{l}
K_{k,\tau,j}e^{\left[(\rho_j-\sum\limits_{i=1}\limits^m\tau_i\sigma_i)
+(\alpha_j-\sum\limits_{i=1}\limits^m\tau_i\beta_i)\right](t-s)
+\mu_j|s|+\sum\limits_{i=1}\limits^m\tau_i\nu_i|t|}\quad
\mbox{\rm for } t\ge s,\\
K_{k,\tau,j}e^{\left[(\sigma_j-\sum\limits_{i=1}\limits^m\tau_i\rho_i)
+(\beta_j-\sum\limits_{i=1}\limits^m
\tau_i\alpha_i)\right](t-s)+\nu_j|s|+\sum\limits_{i=1}\limits^m\tau_i\mu_i|t|}\quad
\mbox{\rm for } t\le s,\end{array}\right.
\end{equation}
where
$K_{k,\tau,j}=c_k\prod\limits_{i=1}\limits^{m}K_i^{\tau_i}K_j$. To
simplify the notation, for $b_j<\tau_1a_1+\ldots+\tau_ma_m$ we set
\[
\omega_{\tau,j}=\rho_j-\sum\limits_{i=1}\limits^m\tau_i\sigma_i+\alpha_j-\sum\limits_{i=1}\limits^m\tau_i\beta_i.
\]

\noindent{\it Statement $(a)$}. We prove this statement by induction. For $r=2$, by the assumptions of the theorem we have
\[
\|F_2^{(\tau,j)}(t)\|\le \|p_2(t)\|\le d_2
e^{-k\varrho|t|}.
\]
Recall that $p_2(t)$ is the coefficient vector of the vector--valued
homogeneous polynomial $\widetilde f_2(t,y)$ in the Taylor
expansion of $f(t,y)$. Then it follows from \eqref{4.26} and
\eqref{4.211} that for $b_j<\tau_1a_1+\ldots+\tau_ma_m$ we have
\[
\|h_{21}^{(\tau,j)}(t)\|\le
K_{2,\tau,j}d_2\int\limits_{-\infty}\limits^{t}e^{\omega_{\tau,j}(t-s)+\mu_j|s|+\sum\limits_{i=1}\limits^m\tau_i\nu_i|t|-k\varrho|s|}ds,
\]
where $|\tau|=2$.

If $t\le 0$, we have
\begin{eqnarray*}
\|h_{21}^{(\tau,j)}(t)\|&\le &
K_{2,\tau,j}d_2e^{\left(\omega_{\tau,j}-\sum\limits_{i=1}\limits^m\tau_i\nu_i\right) t}\int\limits_{-\infty}\limits^{t}e^{-(\omega_{\tau,j}+\mu_j -k\varrho)s}ds\\
&=&\frac{K_{2,\tau,j}d_2}{k\varrho-\omega_{\tau,j}-\mu_j}e^{\left(k\varrho-\sum\limits_{i=1}\limits^m\tau_i\nu_i-\mu_j\right)t}\\
&\le & \frac{K_{2,\tau,j}d_2}{k\varrho-\omega_{\tau,j}-\mu_j}e^{(k-3)\varrho t}
\le \frac{2K_{2,\tau,j}d_2}{D_{j\tau}}e^{(k-3)\varrho t},
\end{eqnarray*}
where we have used the facts that $|\tau|=2$, $\varrho=\max\{\mu_i,\nu_i;\, i=1,\ldots,m\}$ and  $k\varrho-\omega_{\tau,j}-\mu_j>-(\rho_j-\sum\limits_{i=1}\limits^m\tau_i\sigma_i)\ge\frac 12 D_{j\tau}$.

If $t>0$, we have
\begin{eqnarray*}
\|h_{21}^{(\tau,j)}(t)\|&\le &
K_{2,\tau,j}d_2e^{\left(\omega_{\tau,j}+\sum\limits_{i=1}\limits^m\tau_i\nu_i \right)t}
\left(\int\limits_{-\infty}\limits^{0}e^{-(\omega_{\tau,j}+\mu_j -k\varrho)s}ds+\int\limits_{0}\limits^{t}e^{-(\omega_{\tau,j}-\mu_j +k\varrho)s}ds\right)\\
&=&K_{2,\tau,j}d_2e^{\left(\omega_{\tau,j}+\sum\limits_{i=1}\limits^m\tau_i\nu_i \right)t}\times\\
&& \qquad
\left(\frac{1}{k\varrho-\omega_{\tau,j}-\mu_j}+
\frac{-1}{\omega_{\tau,j}-\mu_j +k\varrho}\left(e^{-(\omega_{\tau,j}-\mu_j +k\varrho)t}-1\right)\right)\\
&\le &
\frac{-K_{2,\tau,j}d_2}{\omega_{\tau,j}-\mu_j +k\varrho}e^{-\left(k\varrho-\mu_j-\sum\limits_{i=1}\limits^m\tau_i\nu_i\right)t}
\le \frac{2K_{2,\tau,j}d_2}{D_{j\tau}}e^{-(k-3)\varrho t}.
\end{eqnarray*}
We should mention that in the third inequality we have used the facts that $|\tau|=2$, $k\varrho+\alpha_j\le k\varrho-\sigma<0$ and
$\omega_{\tau,j}-\mu_j+k\varrho\le -\frac 12 D_{j\tau}<0$. In the second inequality we have used the fact  $1/(k\varrho-\omega_{\tau,j}-\mu_j)+1/(\omega_{\tau,j}-\mu_j +k\varrho)=2(k\varrho-\mu_j)/(k\varrho-\omega_{\tau,j}-\mu_j)(\omega_{\tau,j}-\mu_j +k\varrho)<0$, because $k\varrho-\mu_j>\varrho-\mu_j\ge 0$. This proves statement $(a)$ for $r=2$.

In order for using induction we assume that statement $(a)$ holds for $r<2k-5$. Consider the case $r+1$.
By the assumptions of the theorem and the construction of $F_{r+1}^{(\tau,j)}(t)$ there exists a constant $b_{r+1,\tau,j}$ such that
\begin{equation}\label{e111}
\|F_{r+1}^{(\tau,j)}(t)\|\le b_{r+1,\tau,j}
 e^{-\left(rk-\frac{(r-1)(r+4)}{2}\right)\varrho|t|}.
\end{equation}
In fact, $F_{l}^{(\tau,j)}(t)$ is the coefficient of the monomial $y^\tau$ in the $j$th component of the vector--valued homogeneous polynomial $F_{l}(t,y)$ in $y$ of degree $l$, and
\begin{equation}\label{e112}
F_l(t,y)=\left[\sum\limits_{r=2}\limits^l f_r\left(t,y+\sum\limits_{s=2}\limits^{l-1}h_s(t,y)\right)\right]_l-\sum\limits_{r=2}\limits^{l-1}\frac{\partial h_r(t,y)}{\partial y}g_{l+1-r}(t,y),
\end{equation}
where $[A(t,y)]_l$ denotes the homogeneous part of degree $l$ of a polynomial function $A(t,y)$ in $y$. The expression of $F_l(t,y)$ follows from the construction of the transformation $x=y+h(t,y)$ which sent system \eqref{4.21} to its normal form \eqref{4.22}.  Recall that $f_r$, $h_r$ and $g_r$ are the vector--valued homogeneous polynomials of degree $r$ in $y$ of the Taylor expansions of $f,h$ and $g$, respectively. Since $g_r(t,y)=F_r(t,y)=F_{r2}(t,y)$ and $h_r(t,y)=h_{r1}(t,y)$, so the estimation \eqref{e111} can be obtained from \eqref{e112} using the induction through  the estimations on the coefficients of $h_s, g_s$ for $s=2,\ldots,r$ and \eqref{eth141} (i.e. the estimation on the coefficients of $f_s$) for $s=2,\ldots,r+1$.

Now from \eqref{4.26}, \eqref{4.211} and
\eqref{e111} we get that
\[
\|h_{r+1,1}^{(\tau,j)}(t)\|\le
d_{r+1,\tau,j}\int\limits_{-\infty}\limits^{t}
e^{\omega_{\tau,j}(t-s)+\mu_j|s|+\sum\limits_{i=1}\limits^m\tau_i\nu_i|t|-\left(rk-\frac{(r-1)(r+4)}{2}\right)\varrho|s|}ds.
\]
where $|\tau|=r+1$ and $d_{r+1,\tau,j}=K_{r+1,\tau,j}b_{r+1,\tau,j}$.

If $t\le 0$, working in a similar way to the proof of the case $r=2$ and by direct integrating we get that
\begin{eqnarray*}
\|h_{r+1,1}^{(\tau,j)}(t)\|&\le&
\frac{d_{r+1,\tau,j}}{(r k-(r-1)(r+4)/2)\varrho-u_j-\omega_{\tau,j}}
e^{\left((rk-\frac{(r-1)(r+4)}{2})\varrho-\mu_j-\sum\limits_{i=1}\limits^n\tau_i\nu_i\right)t}\\
&\le & \frac{2d_{r+1,\tau,j}}{D_{\tau,j}}
e^{\left(rk-\frac{r(r+5)}{2} \right)\varrho t},
\end{eqnarray*}
where we have used the fact that $-(\mu_j+\sum\limits_{i=1}\limits^n\tau_i\nu_k)t\le -(r+2)t$.

If $t>0$, we have
\begin{eqnarray*}
\|h_{r+1,1}^{(\tau,j)}(t)\|&\le &
d_{r+1,\tau,j}e^{\left(\omega_{\tau,j}+\sum\limits_{i=1}\limits^m\tau_i\nu_i \right)t}
\left(\int\limits_{-\infty}\limits^{0}e^{-\left(\omega_{\tau,j}+\mu_j -(rk-\frac{(r-1)(r+4)}{2})\varrho\right)s}ds\right.\\
&&\qquad\qquad\qquad\qquad\qquad\quad \left.+\int\limits_{0}\limits^{t}e^{-\left(\omega_{\tau,j}-\mu_j +(rk-\frac{(r-1)(r+4)}{2})\varrho\right)s}ds\right)\\
&=&d_{r+1,\tau,j}e^{\left(\omega_{\tau,j}+\sum\limits_{i=1}\limits^m\tau_i\nu_i \right)t}
\left(\frac{1}{\left(rk-\frac{(r-1)(r+4)}{2}\right)\varrho-\omega_{\tau,j}-\mu_j}\right.\\
&& \quad \left.+
\frac{-1}{\omega_{\tau,j}-\mu_j +\left(rk-\frac{(r-1)(r+4)}{2}\right)\varrho}\left(e^{-\left(\omega_{\tau,j}-\mu_j +(rk-\frac{(r-1)(r+4)}{2})\varrho\right)t}-1\right)\right)\\
&\le &
\frac{-d_{r+1,\tau,j}}{\omega_{\tau,j}-\mu_j +\left(rk-\frac{(r-1)(r+4)}{2}\right)\varrho}e^{-\left((rk-\frac{(r-1)(r+4)}{2})\varrho-\mu_j-\sum\limits_{i=1}\limits^m\tau_i\nu_i\right)t}\\
&\le & \frac{2d_{r+1,\tau,j}}{D_{j\tau}}e^{-\left(rk-\frac{r(r+5)}{2}\right)\varrho t},
\end{eqnarray*}
where we have used the fact that $1/((rk-(r-1)(r+4)/{2})\varrho-\omega_{\tau,j}-\mu_j)+1/(\omega_{\tau,j}-\mu_j +(rk-(r-1)(r+4)/{2})\varrho)<0$, because $|\tau|=r+1>2$ and $\alpha_j-\sum\limits_{i=1}\limits^{m}\tau_i\beta_i+(rk-(r-1)(r+4)/2)\varrho<-(r+2)\sigma+rk\varrho<0$. Also we have used the fact  that $\omega_{\tau,j}-\mu_j+(rk-(r-1)(r+4)/2)\varrho\le-\frac 12 D_{j\tau}+\alpha_j-\sum\limits_{i=1}\limits^m\tau_i\beta_i-\mu_j+(rk-(r-1)(r+4)/2)\varrho<-\frac 12 D_{j\tau}$ because $\mu_j\ge 0$ and
$\alpha_j-\sum\limits_{i=1}\limits^m\tau_i\beta_i+(rk-(r-1)(r+4)/2)\varrho\le -(r+2)\sigma+(rk-(r-1)(r+4)/2)\varrho\le-k\varrho(r+2)+(rk-(r-1)(r+4)/2)\varrho=-2k\varrho-(r-1)(r+4)\varrho/2<0$. This proves statement $(a)$ for $r+1$. So by induction, statement $(a)$ follows.

\noindent{\it Statement $(b)$}. Its proof can be got from \eqref{4.211} and from the same arguments as those given in the proof of statement $(a)$. The details are omitted. We finish the proof of the proposition. \end{proof}

By Proposition \ref{p111}, the functions $h_{r1}^{(\tau,j)}(t)$ in \eqref{4.26} and \eqref{4.27} for $r=2,\ldots,2k-5$ are well defined and bounded.
Set $x=y+h(t,y)$ with
\[
h(t,y)=\sum\limits_{r=2}\limits^{2k-5}\left(\sum\limits_{|\tau|=r,1\le j\le m}\bigoplus h_{r1}^{(\tau,j)}y^\tau\right).
\]
Then $h(t,y)$ is a polynomial of degree $2k-5$ with the coefficients all bounded functions in $t\in \mathbb R$.
Hence, by the previous constructions we get that system \eqref{1.7} is transformed into \eqref{1.71} via the time dependent change of variables $x=y+h(t,y)$.

We complete the proof the theorem.\qed

\medskip

\noindent{\bf Acknowledgements.} The author sincerely appreciate the referee for his/her excellent comments, which improve the paper and correct some mistakes in the first version of this paper.

\bigskip

\end{document}